\newtheorem{theorem}{Theorem}
\newtheorem{proposition}[theorem]{Proposition}
\newcommand{\enma}[1]   {\ensuremath{#1}}
\newcommand{\non}{\nonumber}
\newcommand{\beq}{\begin{equation}}
\newcommand{\eeq}{\end{equation}}
\newcommand{\bseq}{\begin{subequations}}
\newcommand{\eseq}{\end{subequations}}
\newcommand{\beqn}{\begin{eqnarray}}
\newcommand{\eeqn}{\end{eqnarray}}
\newcommand{\ba}{\begin{array}}
\newcommand{\ea}{\end{array}}
\newcommand{\bct}{\begin{center}}
\newcommand{\ect}{\end{center}}
\newcommand{\btmz}{\begin{itemize}}
\newcommand{\etmz}{\end{itemize}}
\newcommand{\benum}{\begin{enumerate}}
\newcommand{\eenum}{\end{enumerate}}
\newcommand{\diag}      {\enma{\mathrm{diag}}}
\newcommand{\trace}     {\enma{\mathrm{trace}}}
\newcommand{\matbegin}{
        \left[
}
\newcommand{\matend}{
        \right]
}
\newcommand{\tbo}[2]{
  \matbegin \begin{array}{c}
       #1 \\ #2
       \end{array} \matend }
\newcommand{\obt}[2]{
  \matbegin \begin{array}{cc}
       #1 & #2
       \end{array} \matend }
\newcommand{\tbt}[4]{
  \matbegin \begin{array}{cc}
       #1 & #2 \\ #3 & #4
       \end{array} \matend }
\newcommand{\thbth}[9]{
 \matbegin \begin{array}{ccc}
                #1 & #2 & #3 \\
                #4 & #5 & #6 \\
                #7 & #8 & #9
                \end{array}\matend}
\newcommand{\be}{\begin{equation}}
\newcommand{\ee}{\end{equation}}
\newcommand{\cplxs}{ C\kern -.35em \rule{0.03 em}{.7 ex}~   }
\def\complex{\hbox{C\kern -.45em \rule{0.03 em}{1.5 ex}}~}
\newcommand{\bi}{\begin{itemize}}
\newcommand{\ei}{\end{itemize}}
\newcommand{\eN}{{\cal N}}
\newcommand{\ds}{\displaystyle}
\newcommand{\tc}{\textcolor}
\newcommand{\dso}{\mathds{1}}
\definecolor{darkgreen}{rgb}{0,0.4,0}
\newcommand{\DefinedAs}[0]{\mathrel{\mathop:}=}
\begin{document}

\title{\LARGE \bf Algorithms for Leader Selection in \\[-0.25cm] Stochastically Forced Consensus Networks}

\author{Fu Lin, Makan Fardad, and Mihailo R. Jovanovi\'c
\thanks{Financial support from the National Science Foundation under CAREER Award CMMI-06-44793 and under awards CMMI-09-27720 and CMMI-0927509 is gratefully acknowledged.}
\thanks{F.\ Lin and M.\ R.\ Jovanovi\'c are with the Department of Electrical and Computer Engineering, University of Minnesota, Minneapolis, MN  55455. M.\ Fardad is with the Department of Electrical Engineering and Computer Science, Syracuse University, NY 13244. E-mails: fu@umn.edu, makan@syr.edu, mihailo@umn.edu.}}

\maketitle

    \vspace{-9ex}

    \begin{abstract}
    \vspace{-2ex}
We are interested in assigning a pre-specified number of nodes as leaders in order to minimize the mean-square deviation from consensus in stochastically forced networks. This problem arises in several applications including control of vehicular formations and localization in sensor networks. For networks with leaders subject to noise, we show that the Boolean constraints (a node is either a leader or it is not) are the only source of nonconvexity. By relaxing these constraints to their convex hull we obtain a lower bound on the global optimal value. We also use a simple but efficient greedy algorithm to identify leaders and to compute an upper bound. For networks with leaders that perfectly follow their desired trajectories, we identify an additional source of nonconvexity in the form of a rank constraint. Removal of the rank constraint and relaxation of the Boolean constraints yields a semidefinite program for which we develop a customized algorithm well-suited for large networks. Several examples ranging from regular lattices to random graphs are provided to illustrate \mbox{the effectiveness of the developed algorithms.}
    \end{abstract}

    \vspace{-2ex}

    \begin{keywords}
    \vspace{-2ex}
Alternating direction method of multipliers, consensus networks, convex optimization, convex relaxations, greedy algorithm, leader selection, performance bounds, semidefinite programming, sensor selection, variance amplification.
    \end{keywords}


\vspace*{-4ex}
\section{Introduction}

Reaching consensus in a decentralized fashion is an important problem in network science~\cite{mesege10}. This problem is often encountered in social networks where a group of individuals is trying to agree on a certain issue~\cite{deg74,goljac10}. A related load balancing problem has been studied extensively in computer science with the objective of distributing evenly computational load over a network of processors~\cite{cyb89,boi90}. Recently, consensus problem has received considerable attention in the context of distributed control~\cite{jadlinmor03,carfagspezam07}. For example, in cooperative control of vehicular formations, it is desired to use local interactions between vehicles in order to reach agreement on quantities such as heading angle, velocity, and inter-vehicular spacing. Since vehicles have to maintain agreement in the presence of uncertainty, it is important to study robustness of consensus. Several authors have recently used the steady-state variance of the deviation from consensus to characterize fundamental performance limitations in stochastically forced networks~\cite{barhes07,barhes08,xiaboykim07,youscaleo10,zelmes11,bamjovmitpat12,linfarjovTAC12platoons}.

In this paper, we consider undirected consensus networks with two groups of nodes. Ordinary nodes, the so-called {\em followers\/}, form their action using relative information exchange with their neighbors; special nodes, the so-called {\em leaders\/}, also have access to their own states. This setting may arise in the control of vehicular formations and in distributed localization in sensor networks. In vehicular formations, all vehicles are equipped with ranging devices (that provide information about relative distances with respect to their neighbors), and the leaders additionally have GPS devices (that provide information with respect to a global frame of reference).

We are interested in assigning a pre-specified number of nodes as leaders in order to minimize the mean-square deviation from consensus. For undirected networks in which all nodes are subject to stochastic disturbances, we show that the Boolean constraints (a node is either a leader or it is not) are the only source of nonconvexity. The combinatorial nature of these constraints makes determination of the global minimum challenging. Instead, we focus on computing lower and upper bounds on the global optimal value. Convex relaxation of Boolean constraints is used to obtain a lower bound, and a greedy algorithm is used to obtain an upper bound and to identify leaders. We show that the convex relaxation can be formulated as a semidefinite program~(SDP) which can be solved efficiently for small networks. We also develop an efficient customized interior point method that is well-suited for large-scale problems. Furthermore, we improve performance of one-leader-at-a-time (greedy) approach using a procedure that checks for possible swaps between leaders and followers. In both steps, algorithmic complexity is significantly reduced by exploiting the structure of low-rank modifications to Laplacian matrices. The computational efficiency of our algorithms makes them well-suited for establishing achievable performance \mbox{bounds for leader selection problem in large stochastically forced networks.}

Following~\cite{patbam10,clapoo11,clabuspoo12,kawege12}, we also examine consensus networks in which leaders follow desired trajectories at all times. For consensus networks with at least one leader, adding leaders always improves performance~\cite{patbam10}. In view of this, a greedy algorithm that selects one leader at a time by assigning the node that leads to the largest performance improvement as a leader was proposed in~\cite{patbam10}. Furthermore, it was proved in~\cite{clabuspoo12} that the mean-square deviation from consensus is a supermodular function of the set of noise-free leaders. Thus, the supermodular optimization framework in conjunction with the greedy algorithm can be used to provide selection of leaders that is within a provable \mbox{bound from globally optimal solution~\cite{clabuspoo12}.}

In contrast to~\cite{patbam10,clapoo11,clabuspoo12,kawege12}, we use convex optimization to quantify performance bounds for the noise-free leader selection problem. While we show that the leader selection is additionally complicated by the presence of a nonconvex rank constraint, we obtain an SDP relaxation by dropping the rank constraint and by relaxing the aforementioned Boolean constraints. Furthermore, we exploit the separable structure of the resulting constraint set and develop an efficient algorithm based on the alternating direction method of multipliers~(ADMM). As in the noise-corrupted problem, we use a greedy approach followed by a swap procedure to compute an upper bound and to select leaders. In both steps, we exploit the properties of low-rank modifications to Laplacian matrices to reduce computational complexity.

Several recent efforts have focused on characterizing graph-theoretic conditions for controllability of networks in which a pre-specified number of leaders act as control inputs~\cite{tan04,liuchuwanxie08,rahjimesege09,jiawanlinwan09,clabuspooCDC12,yazabbege12}. In contrast, our objective is to identify leaders that are most effective in minimizing the deviation from consensus in the presence of disturbances. Several alternative performance indices for the selection of leaders have been also recently examined in~\cite{claalobuspoo12,clabuspooCDC12,clabuspoo13}. Other related work on augmenting topologies of networks to improve their algebraic connectivity includes~\cite{ghoboy06,zelschall12}.

We finally comment on the necessity of considering two different problem formulations. The noise-free leader selection problem, aimed at identifying influential nodes in undirected networks, was originally formulated in~\cite{patbam10} and consequently studied in~\cite{clapoo11,clabuspoo12,kawege12}. To the best of our knowledge, the noise-corrupted leader selection problem first appeared in a preliminary version of this work~\cite{linfarjovCDC11}. The noise-corrupted formulation is introduced for two reasons: First, it is well-suited for applications where a certain number of nodes are to be equipped with additional capabilities (e.g., the GPS devices) in order to improve the network's performance; for example, this setup may be encountered in vehicular formation and sensor localization problems. And, second, in contrast to the noise-free formulation, the Boolean constraints are the only source of nonconvexity in the noise-corrupted problem; consequently, a convex relaxation in this case is readily obtained by enlarging Boolean constraints to their convex hull. Even though these formulations have close connections in a certain limit, the differences between them are significant enough to warrant separate treatments. As we show in Sections~\ref{sec.noise_corrupted} and~\ref{sec.noisefree}, the structure of the corresponding optimization problems necessitates separate convex relaxations and the development of different customized optimization algorithms. Noise-free and noise-corrupted setups are thus of independent interest \mbox{from the application, problem formulation, and algorithmic points of view.}

The paper is organized as follows. In Section~\ref{sec.problem}, we formulate the problem and establish connections between the leader selection and the sensor selection problems. In Section~\ref{sec.noise_corrupted}, we develop efficient algorithms to compute lower and upper bounds on the global optimal value for the noise-corrupted leader selection problem. In Section~\ref{sec.noisefree}, we provide an SDP relaxation of the noise-free formulation and employ the ADMM algorithm to deal with large-scale problems. We conclude the paper with a summary of our contributions in Section~\ref{sec.conclude}.

    \vspace*{-2ex}
\section{Problem formulation}
    \label{sec.problem}

In this section, we formulate the noise-corrupted and noise-free leader selection problems in consensus networks and make connections to sensor selection in distributed localization problems. Furthermore, we establish an equivalence between two problem formulations when all leaders use arbitrarily large feedback gains on their own states.

	\vspace*{-2ex}
\subsection{Leader selection problem in consensus networks}

We consider networks in which each node updates a scalar state $\psi_i$,
    \[
        \dot{\psi}_i
        \; = \;
        u_i
        \; + \;
        w_i,
        ~~
        i \,=\, 1,\ldots,n
    \]
where $u_i$ is the control input and $w_i$ is the white stochastic disturbance with zero-mean and unit-variance. A node is a {\em follower} if it uses {\em only\/} relative information exchange with its neighbors to form its control action,
    \beq
    \non
        u_i
        \,=\,
        - \sum_{j\,\in\,\eN_i}
        ( \psi_i \,-\, \psi_j ).
    \eeq
A node is a {\em leader\/} if, in addition to relative information exchange with its neighbors, it also has access to its own state
    \[
        u_i
        \,=\,
        - \sum_{j\,\in\,\eN_i}
        ( \psi_i \,-\, \psi_j )
        \,-\,
        \kappa_i \, \psi_i.
    \]
Here, $\kappa_i$ is a positive number and ${\cal N}_i$ is the set of all nodes that node $i$ communicates with.


The control objective is to strategically deploy leaders in order to reduce the variance amplification in stochastically forced consensus networks. The communication network is modeled by a connected, undirected graph; thus, the graph Laplacian $L$ is a symmetric positive semidefinite matrix with a single eigenvalue at zero and the corresponding eigenvector $\dso$ of all ones~\cite{mesege10}. A state-space representation of the leader-follower consensus network is therefore given by
    \beq
    \label{eq.state}
        \dot{\psi}
        \,=\,
        - \,
        (L \,+\, D_\kappa D_x )
        \, \psi
        \,+\,
        w
    \eeq
where
	$
	{\cal E}
        	\left(
        	w(t) \, w^T(\tau)
        	\right)
	=
	I \, \delta (t - \tau),
	$
${\cal E}(\cdot)$ is the expectation operator, and
    \beq
    \non
        D_\kappa \, \DefinedAs \, \diag \left( \kappa \right),
        ~~~
        D_x  \, \DefinedAs \, \diag \left( x \right)
    \eeq
are diagonal matrices formed from the vectors $\kappa = [\, \kappa_1 ~\cdots~ \kappa_n \,]^T$ and $x = [\, x_1 ~ \cdots ~ x_n \,]^T$. Here, $x$ is a Boolean-valued vector with its $i$th entry $x_i \in \{0, \, 1\}$, indicating that node $i$ is a leader if $x_i = 1$ and that node $i$ is a follower if $x_i = 0$. In connected networks with at least one leader, $L + D_\kappa D_x$ is a positive definite matrix~\cite{rahjimesege09}. The steady-state covariance matrix of $\psi$
    \[
        \Sigma
        \, \DefinedAs \,
        \ds{\lim_{t \, \to \, \infty}}
        {\cal E}
        \left(
        \psi(t) \, \psi^T(t)
        \right)
    \]
can thus be determined from the Lyapunov equation
    \beq
        (L + D_\kappa D_x)
        \,
        \Sigma
        \, + \,
        \Sigma
        \,
        (L + D_\kappa D_x)
        \; = \;
        I
        \non
    \eeq
whose unique solution is given by
    \[
        \Sigma \,=\, \dfrac{1}{2} \, (L + D_\kappa D_x)^{-1}.
    \]
Following~\cite{xiaboykim07,bamjovmitpat12}, we use the total steady-state variance
    \beq
    \label{eq.variance}
        \trace
        \left(
        \Sigma
        \right)
        \,=\,
        \dfrac{1}{2} \, \trace \left( (L + D_\kappa D_x)^{-1} \right)
    \eeq
to quantify performance of stochastically forced consensus networks.

We are interested in identifying $N_l$ leaders that are most effective in reducing the steady-state variance~(\ref{eq.variance}). For an {\em a priori\/} specified number of leaders $N_l < n$, the leader selection problem can thus be formulated as
    \beq
    \label{LS1}
    \tag{LS1}
    \ba{lrcl}
    \underset{x}{\mbox{minimize}}
    &
    J(x)
    & = &
    \trace
    \left(
    ( L \,+\, D_\kappa D_x )^{-1}
    \right)
    \\[0.1cm]
    \text{subject to}
    &
    x_{i}
    & \in &
    \{0,1\} ,
    ~~~~~
    i \,=\, 1,\ldots,n
    \\
    &
    \dso^T x
    & = & N_l.
    \ea
    \eeq
In~\eqref{LS1}, the number of leaders $N_l$ as well as the matrices $L$ and $D_\kappa$ are the problem data, and the vector $x$ is the optimization variable. As we show in Section~\ref{sec.noise_corrupted}, for a positive definite matrix $L + D_\kappa D_x$, the objective function $J$ in~\eqref{LS1} is a convex function of $x$. The challenging aspect of~\eqref{LS1} comes from the nonconvex Boolean constraints $x_{i} \in \{0,1\}$; in general, finding the solution to~\eqref{LS1} requires an intractable combinatorial search.

Since the leaders are subject to stochastic disturbances, we refer to~\eqref{LS1} as the {\em noise-corrupted\/} leader selection problem. We also consider the selection of {\em noise-free\/} leaders which follow their desired trajectories at all times~\cite{patbam10}. Equivalently, in coordinates that determine deviation from the desired trajectory, the state of every leader is identically equal to zero, and the network dynamics are thereby governed by the dynamics of the followers
    \[
        \dot{\psi}_f
        \; = \;
        - \, L_f \, \psi_f
        \; + \;
        w_f.
    \]
Here, $L_f$ is obtained from $L$ by eliminating all rows and columns associated with the leaders. Thus, the problem of selecting leaders that minimize the steady-state variance of $\psi_f$ amounts to
    \[
        \ba{lrcl}
        \underset{x}{\mbox{minimize}}
        &
        J_f(x)
        & = &
        \trace \, (L_f^{-1})
        \\[0.15cm]
        \text{subject to}
        &
        x_i
        & \in &
        \{0,1\},
        ~~~~~
        i \; = \; 1,\ldots,n
        \\
        &
        \dso^T x & = & N_l.
        \ea
        \tag{LS2}
        \label{LS2}
    \]
As in~\eqref{LS1}, the Boolean constraints $x_{i} \in \{0,1\}$ are nonconvex. Furthermore, as we demonstrate in Section~\ref{sec.convex_noisefree}, the objective function $J_f$ in~\eqref{LS2} is a nonconvex function of $x$.

We note that the noise-free leader selection problem~\eqref{LS2} cannot be uncovered from the noise-corrupted leader selection problem~\eqref{LS1} by setting the variance of disturbances (that act on noise-corrupted leaders) to zero. Even when leaders are not directly subject to disturbances, their interactions with followers would prevent them from perfectly following their desired trajectories. In what follows, we establish the equivalence between the noise-corrupted and noise-free leader selection problems~\eqref{LS1} and~\eqref{LS2} in the situations when all noise-corrupted leaders use arbitrarily large feedback gains on their own states. Specifically, for white in time stochastic disturbance $w$ with unit variance, the variance of noise-corrupted leaders in~\eqref{eq.state} decreases to zero as feedback gains on their states increase to infinity; see Appendix~\ref{app.nf-vs-nc}.

	\vspace*{-2ex}
\subsection{Connections to the sensor selection problem}
    \label{sec.sensor-selection}

The problem of estimating a vector $\psi \in \mathbb{R}^n$ from $m$ relative measurements that are corrupted by additive white noise
    \[
        y_{ij} \,=\, \psi_i \,-\, \psi_j \,+\, w_{ij}
    \]
arises in distributed localization in sensor networks. We consider the simplest scenario in which all $\psi_i$'s are scalar-valued, with $\psi_i$ denoting the position of sensor $i$; see~\cite{barhes07,barhes08} for vector-valued localization problems. Let ${\cal I}_r$ denote the index set of the $m$ pairs of distinct nodes between which the relative measurements are taken and let $e_{ij}$ belong to $\mathbb{R}^n$ with $1$ and $-1$ at its $i$th and $j$th elements, respectively, and zero everywhere else. Then,
    \[
        y_{ij}
        \,=\,
        e_{ij}^T \,\psi
        \,+\,
        w_{ij},
        ~~~
        (i,j) \, \in \, {\cal I}_r
    \]
or, equivalently in the matrix form,
    \beq
    \label{eq.relative}
        y_r \, = \, E_r^T \psi \, + \, w_r
    \eeq
where $y_r$ is the vector of relative measurements and $E_r \in \mathbb{R}^{n \times m}$ is the matrix whose columns are determined by $e_{ij}$ for $(i,j) \in {\cal I}_r$. Since $\psi + a \dso$ for any scalar $a$ results in the same $y_r$, use of relative measurements provides estimate of the position vector $\psi$ only up to an additive constant. This can be also verified by noting that $E_r^T \dso = 0$.

Suppose that $N_l$ sensors can be equipped with GPS devices that allow them to measure their absolute positions
    \[
        y_a
        \,=\,
        E_a^T \psi  \,+\, E_a^T w_a
    \]
where $E_a \in \mathbb{R}^{n \times N_l}$ is the matrix whose columns are determined by $e_i$, the $i$th unit vector in $\mathbb{R}^n$, for $i \in {\cal I}_a$, the index set of absolute measurements. Then the vector of all measurements is given by
    \be
    \label{eq.y}
        \tbo{y_r}{y_a}
        \,=\,
        \tbo{E^T_r}{E^T_a} \psi
        \,+\,
        \tbt{I}{0}{0}{E_a^T}
        \tbo{w_r}{w_a}
    \ee
where $w_r$ and $w_a$ are zero-mean white stochastic disturbances with
    \[
        {\cal E} ( w_r w_r^T ) \,=\, W_r,
        ~~~
        {\cal E} ( w_a w_a^T ) \,=\, W_a,
        ~~~
        {\cal E} ( w_r w_a^T ) \,=\, 0.
    \]

In Appendix~\ref{app.sensor}, we show that the problem of choosing $N_l$ absolute position measurements among $n$ sensors to minimize the variance of the estimation error is equivalent to the noise-corrupted leader selection problem~\eqref{LS1}. Furthermore, when the positions of $N_l$ sensors are known {\em a priori\/} we show that the problem of assigning $N_l$ sensors to minimize the variance of the estimation error amounts to solving the noise-free leader selection problem~\eqref{LS2}.

	\vspace*{-2ex}
\section{Lower and upper bounds on global performance: Noise-corrupted leaders}
    \label{sec.noise_corrupted}

In this section, we show that the objective function $J$ in the noise-corrupted leader selection problem~\eqref{LS1} is convex. Convexity of $J$ is utilized to develop efficient algorithms for  computation of lower and upper bounds on the global optimal value~\eqref{LS1}. A lower bound results from convex relaxation of Boolean constraints in~\eqref{LS1} which yields an SDP that can be solved efficiently using a customized interior point method. On the other hand, an upper bound is obtained using a greedy algorithm that selects one leader at a time. Since greedy algorithm introduces low-rank modifications to Laplacian matrices, we exploit this feature in conjunction with the matrix inversion lemma to gain computational efficiency. Finally, we provide two examples to illustrate performance of the developed approach.

	\vspace*{-2ex}
\subsection{Convex relaxation to obtain a lower bound}
    \label{sec.convex}

Since the objective function $J$ in~\eqref{LS1} is the composition of a convex function $\trace \, (\bar{L}^{-1})$ of a positive definite matrix $\bar{L} \succ 0$ with an affine function $\bar{L} \DefinedAs L + D_\kappa D_x$, it follows that $J$ is a convex function of $x$. By enlarging the Boolean constraint set $x_{i} \in \{0,1\}$ to its convex hull $x_{i} \in [0,1]$, we obtain the following convex relaxation of~\eqref{LS1}
    \beq
    \tag{CR1}
    \label{CR1}
    \ba{ll}
    \underset{x}{\mbox{minimize}}
    &
    J(x)
    \,=\,
    \trace \, \big( ( L \,+\, D_\kappa D_x )^{-1} \big)
    \\[0.1cm]
    \text{subject to}
    &
    \dso^T x \,=\, N_l,
    ~~~
    0 \, \leq \, x_i \, \leq \, 1,
    ~~~~~
    i \,=\, 1,\ldots,n.
    \ea
    \eeq
Since we have enlarged the constraint set, the solution $x^*$ of the relaxed problem~\eqref{CR1} provides a lower bound on $J_{\rm opt}$. However, $x^*$ may not provide a selection of $N_l$ leaders, as it may not be Boolean-valued. If $x^*$ is Boolean-valued, then it is the global solution of~\eqref{LS1}.

Schur complement can be used to formulate the optimization problem~\eqref{CR1} as an SDP
    \beq
    \non
    \ba{ll}
    \underset{X, \; x}{\mbox{minimize}}
    &
    \trace \,( X ) \\[0.1cm]
    \text{subject to}
    &
    \tbt{X}{I}{I}{ L + D_\kappa D_x } \, \succeq \, 0
    \\[0.45cm]
    &
    \dso^T x \,=\, N_l,
    ~~~
    0 \, \leq \, x_i \, \leq \, 1,
    ~~~
    i \,=\, 1, \ldots, n.
    \ea
    \eeq
For small networks~(e.g., $n \leq 30$), this problem can be solved efficiently using standard SDP solvers. For large networks, we develop a customized interior point method in Appendix~\ref{app.IPM}.

	\vspace*{-2ex}
\subsection{Greedy algorithm to obtain an upper bound}
    \label{sec.greedy}

With the lower bound on the optimal value $J_{\rm opt}$ resulting from the convex relaxation~\eqref{CR1}, we next use a greedy algorithm to compute an upper bound on $J_{\rm opt}$. This algorithm selects one leader at a time by assigning the node that provides the largest performance improvement as the leader. Once this is done, an attempt to improve a selection of $N_l$ leaders is made by checking possible swaps between the leaders and the followers. In both steps, we show that substantial improvement in algorithmic complexity can be achieved by exploiting structure of the low-rank modifications to Laplacian matrices.

\subsubsection{One-leader-at-a-time algorithm}

As the name suggests, we select one leader at a time by assigning the node that results in the largest performance improvement as the leader.
For $i = 1, \ldots, n $, we compute
    \[
    J_1^i
    \;=\;
    \trace
    \left(
    (L \,+\, \kappa_i e_i e_i^T)^{-1}
    \right)
    \]
and assign the node, say $v_1$, that achieves the minimum value of $\{J_1^i\}$ as the first leader. If two or more nodes provide the optimal performance, we select one of these nodes as a leader. After choosing $s$ leaders, $v_1,\ldots,v_s$, we compute
    \[
    \ba{rrl}
    J_{s+1}^i
    & \! = \! &
    \trace
    \left(
    (L_s \,+\, \kappa_i e_i e_i^T)^{-1}
    \right)
    \\
    L_s
    & \! \DefinedAs \! &
    L
    \,+\,
    \kappa_{v_1}
    e_{v_1} e_{v_1}^T
    \,+\,
    \cdots
    \,+\,
    \kappa_{v_s} e_{v_s} e_{v_s}^T
    \ea
    \]
for $i \notin \{v_1,\ldots,v_s\}$, and select node $v_{s+1}$ that yields the minimum value of $\{J_{s+1}^i\}$ as the $(s+1)$th leader. This procedure is repeated until all $N_l$ leaders are selected.

Without exploiting structure, the above procedure requires $O(n^4 N_l)$ operations. On the other hand, the rank-$1$ update formula resulting from the matrix inversion lemma
    \beq
    \label{eq.rank1}
    (
    L_s
    \,+\,
    \kappa_i e_i e_i^T
    )^{-1}
    \; = \;
    L_s^{-1}
    \; - \;
    \dfrac
    {L_s^{-1} \, \kappa_i e_i e_i^T \, L_s^{-1}}
    {1 \,+\, \kappa_i e_i^T L_s^{-1} e_i}
    \eeq
yields
    \beq
    \non
    J_{s+1}^i
    \; = \;
    \trace
    \,(L_s^{-1})
    \; - \;
    \dfrac
    { \kappa_i \, \| (L_s^{-1})_i \|_2^2}
    {1 \, + \, \kappa_i (L_s^{-1})_{ii}}.
    \eeq
Here, $(L_s^{-1})_i$ is the $i$th column of $L_s^{-1}$ and $(L_s^{-1})_{ii}$ is the $ii$th entry of $L_s^{-1}$. To initiate the algorithm, we use the generalized \mbox{rank-$1$} update~\cite{mey73},
    \beq
    \non
    L_1^{-1}
    =
    L^\dagger
    \,-\,
    ( L^\dagger e_i ) \dso^T
    -\,
    \dso ( L^\dagger e_i )^T
    \,+\,
    ( (1/\kappa_i) \,+\, e_i^T  L^\dagger e_i )
    \dso \dso^T
    \eeq
which thereby yields,
    \beq
    \non
    J_1^i
    \,=\,
    \trace\,(L^\dagger)
    \,+\,
    n \,
    ((1/\kappa_i) \,+\, e_i^T L^\dagger e_i)
    \eeq
where $L^\dagger$ denotes the pseudo-inverse of $L$~(e.g., see~\cite{ghoboysab08})
    \[
    L^\dagger
    \, = \,
    (
    L
    \, + \,
    \dso \dso^T/n
    )^{-1}
    \, - \,
    \dso \dso^T/n.
    \]
Therefore, once $L_s^{-1}$ is determined, the inverse of the matrix on the left-hand-side of~(\ref{eq.rank1}) can be computed using $O(n^2)$ operations and $J_{s+1}^i$ can be evaluated using $O(n)$ operations. Overall, $N_l$ rank-1 updates, $nN_l/2$ objective function evaluations, and one full matrix inverse (for computing $L_s^{-1}$) require $O(n^2 N_l + n^3)$ operations as opposed to $O(n^4 N_l)$ operations without exploiting the low-rank structure. In large-scale networks, further computational advantage may be gained by exploiting structure of the underlying Laplacian matrices; e.g., see~\cite{spi10}.

\subsubsection{Swap algorithm}
    \label{sec.swap}

After leaders are selected using the one-leader-at-a-time algorithm, we swap one of the $N_l$ leaders with one of the $n - N_l$ followers, and check if such a swap leads to a decrease in $J$. If no decrease occurs for all $(n-N_l)N_l$ swaps, the algorithm terminates; if a decrease in $J$ occurs, we update the set of leaders and then check again the possible $(n-N_l)N_l$ swaps for the new leader selection. A similar swap procedure has been used as an effective means for improving performance of combinatorial algorithms encountered in graph partitioning~\cite{kerlin70}, sensor selection~\cite{josboy09}, and community detection problems~\cite{new06}.

Since a swap between a leader $i$ and a follower $j$ leads to a rank-$2$ modification~(\ref{eq.rank2}) to the matrix
   $
   \bar{L}
   \DefinedAs
   L
   +
   D_\kappa D_x,
   $
we can exploit this low-rank structure to gain computational efficiency. Using the matrix inversion lemma, we have
    \beq
    \ba{l}
    \left(
    \bar{L}
    \,-\,
    \kappa_i e_i e_i^T
    \,+\,
    \kappa_j e_j e_j^T
    \right)^{-1}
    = \;
    \bar{L}^{-1}
    \,-\,
    \bar{L}^{-1}
    \,
    \bar{E}_{ij}
    \,
    (
    I_2
    \,+\,
    E_{ij}^T
    \bar{L}^{-1}
    \bar{E}_{ij}
    )^{-1}
    \,
    E_{ij}^T
    \,
    \bar{L}^{-1}
    \ea
    \label{eq.rank2}
    \eeq
where
    $
    E_{ij}
    =
    [\,e_i ~~ e_j \,],
    ~
    \bar{E}_{ij}
    =
    [\,- \, \kappa_i e_i ~~ \kappa_j e_j \,],
    $
and $I_2$ is the $2\times 2$ identity matrix. Thus, the objective function after the swap between leader $i$ and follower $j$ is given by
    \beq
    \label{eq.Jij}
    J_{ij}
    \, = \,
    J
    \,-\,
    \trace
    \left(
    (
    I_2
    +
    E_{ij}^T
    \bar{L}^{-1}
    \bar{E}_{ij}
    )^{-1}
    E_{ij}^T
    \,
    \bar{L}^{-2}
    \bar{E}_{ij}
    \right).
    \eeq
Here, we do not need to form the full matrix $\bar{L}^{-2}$, since
    \[
    E_{ij}^T
    \,
    \bar{L}^{-2}
    \bar{E}_{ij}
    \,=\,
    \tbt
    { -\, \kappa_i (\bar{L}^{-2})_{ii}}
    { \kappa_j (\bar{L}^{-2})_{ij} }
    { -\, \kappa_i(\bar{L}^{-2})_{ji}}
    { \kappa_j (\bar{L}^{-2})_{jj}}
    \]
and the $ij$th entry of $\bar{L}^{-2}$ can be computed by multiplying the $i$th row of $\bar{L}^{-1}$ with the $j$th column of $\bar{L}^{-1}$. Thus, evaluation of $J_{ij}$ takes $O(n)$ operations and computation of the matrix inverse in~(\ref{eq.rank2}) requires $O(n^2)$ operations.

Since the total number of swaps for large-scale networks can be large, we follow~\cite{josboy09} and limit the maximum number of swaps with a linear function of the number of nodes $n$. Furthermore, the particular structure of networks can be exploited to reduce the required number of swaps. To illustrate this, let us consider the problem of selecting one leader in a network with $9$ nodes shown in Fig.~\ref{fig.grid9}. Suppose that all nodes in the sets $S_1 \DefinedAs \{ 1, 3, 7, 9 \}$ and $S_2 \DefinedAs \{ 2, 4, 6, 8 \}$ have the same feedback gains $\kappa_1$ and $\kappa_2$, respectively. In addition, suppose that node $5$ is chosen as a leader. Owing to symmetry, to check if selecting other nodes as a leader can improve performance we only need to swap node $5$ with one node in each set $S_1$ and $S_2$. We note that more sophisticated symmetry exploitation techniques have been discussed in~\cite{boydiaparxia09,rahjimesege09}.

    \begin{figure}
      \centering
        \includegraphics[width=0.25\textwidth]{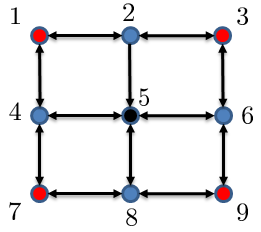}
      \caption{A lattice with $9$ nodes.}
      \label{fig.grid9}
    \end{figure}

    \begin{figure}
    \centering
    \subfloat[Lower and upper bounds resulting from convex relaxation~\eqref{CR1} and greedy algorithm, respectively.]
    {\label{fig.geometric_bounds}
    \begin{tikzpicture}
    \begin{axis}[
    	xlabel=number of leaders $N_l$]
    \addplot[color=red,mark=*]
    	table[x=leaders,y=upper] {geometric_graph_bounds.mat};
    \addplot[color=blue,mark=o]
    	table[x=leaders,y=lower] {geometric_graph_bounds.mat};
    \legend{upper bounds,lower bounds}
    \end{axis}
    \end{tikzpicture}
    }
    \subfloat[The gap between lower and upper bounds.]
    {\label{fig.geometric_gap}
    \begin{tikzpicture}
    \begin{axis}[
    	xlabel=number of leaders $N_l$]
    \addplot[color=black,mark=diamond*]
    	table[x=leaders,y=gap] {geometric_graph_bounds.mat};
    \end{axis}
    \end{tikzpicture}
    }
    \caption{Bounds on the global optimal value for noise-corrupted leader selection~\eqref{LS1} for the random network example.}
    \label{fig.random}
    \end{figure}
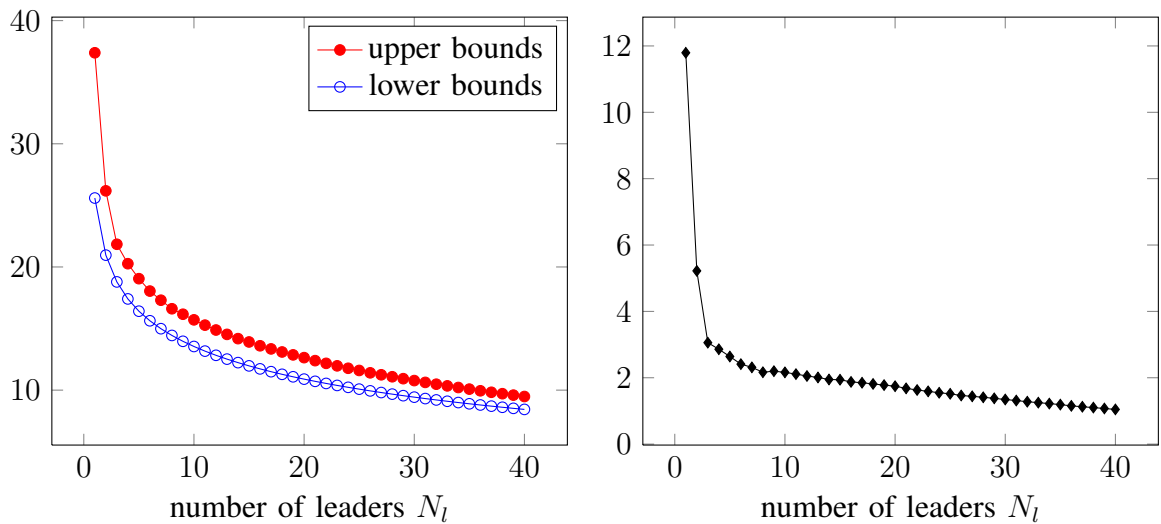

    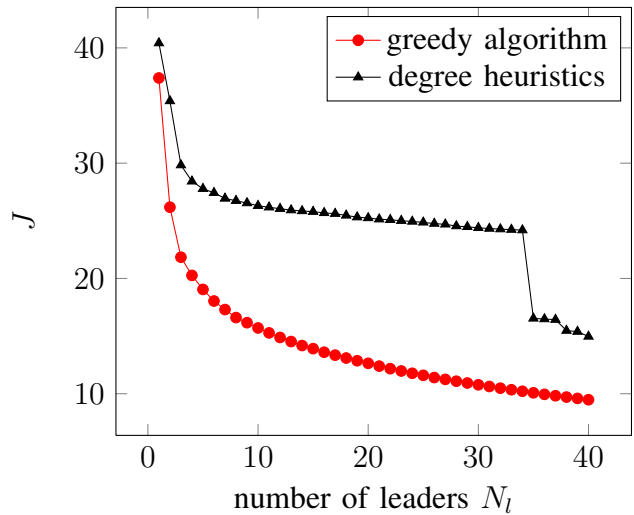
\begin{figure}
      \centering
        \begin{tikzpicture}
        \begin{axis}[
        	xlabel=number of leaders $N_l$,
        	ylabel=$J$]
        \addplot[color=red,mark=*]
        	table[x=leaders,y=greedy] {geometric_graph_greedy_degree.mat};
        \addplot[color=black,mark=triangle*]
        	table[x=leaders,y=degree] {geometric_graph_greedy_degree.mat};
        \legend{greedy algorithm, degree heuristics}
        \end{axis}
        \end{tikzpicture}
      \caption{Performance obtained using the greedy algorithm and the degree heuristics for the random network example.}
      \label{fig.random_comparison}
    \end{figure}

	\vspace*{-2ex}
\subsection{Examples}
    \label{sec.example}

We next provide two examples to illustrate performance of developed methods.

\subsubsection{A random network example}
    \label{ex.random_graph}

We consider the selection of noise-corrupted leaders in a network with $100$ randomly distributed nodes in a unit square. A pair of nodes communicate with each other if their distance is not greater than $0.2$ units. This scenario may arise in sensor networks with prescribed omnidirectional (i.e., disk shape) sensing range~\cite{mesege10,sritewluo08}.

Figure~\ref{fig.geometric_bounds} shows lower bounds resulting from the convex relaxation~\eqref{CR1} and upper bounds resulting from the greedy algorithm (i.e., the one-leader-at-a-time algorithm followed by the swap algorithm). As the number of leaders $N_l$ increases, the gap between lower and upper bounds decreases; see Fig.~\ref{fig.geometric_gap}. For $N_l = 1, \ldots, 40$, the number of swap updates ranges between $1$ and $26$ and the average number of swaps is $8$.

As shown in Fig.~\ref{fig.random_comparison}, the greedy algorithm significantly outperforms the degree-heuristics-based-selection. To gain some insight into the selection of leaders, we compare results obtained using the greedy method with the degree heuristics. As shown in Fig.~\ref{fig.random_k5_degree}, the degree heuristics chooses nodes that turn out to be in the proximity of each other. In contrast, the greedy method selects leaders that, in addition to having large degrees, are far from each other; see Fig.~\ref{fig.random_k5_greedy}. As a result, the selected leaders can influence more followers and thus more effectively improve the performance of the network.

The contrast between degree heuristics and greedy algorithms becomes even more dramatic for large number of leaders. As shown in Figs.~\ref{fig.random_k40_greedy} and~\ref{fig.random_k40_degree}, the leader sets obtained using the greedy algorithm and degree heuristics are almost {\em complements\/} of each other. While the degree heuristics clusters the leaders around the center of the network, the greedy algorithm distributes the leaders around the boundary of the network.

    \begin{figure}
      \centering
        \subfloat[Greedy algorithm: $N_l = 5$, $J = 19.0$]
        {\label{fig.random_k5_greedy}
        \includegraphics[width=0.45\textwidth]{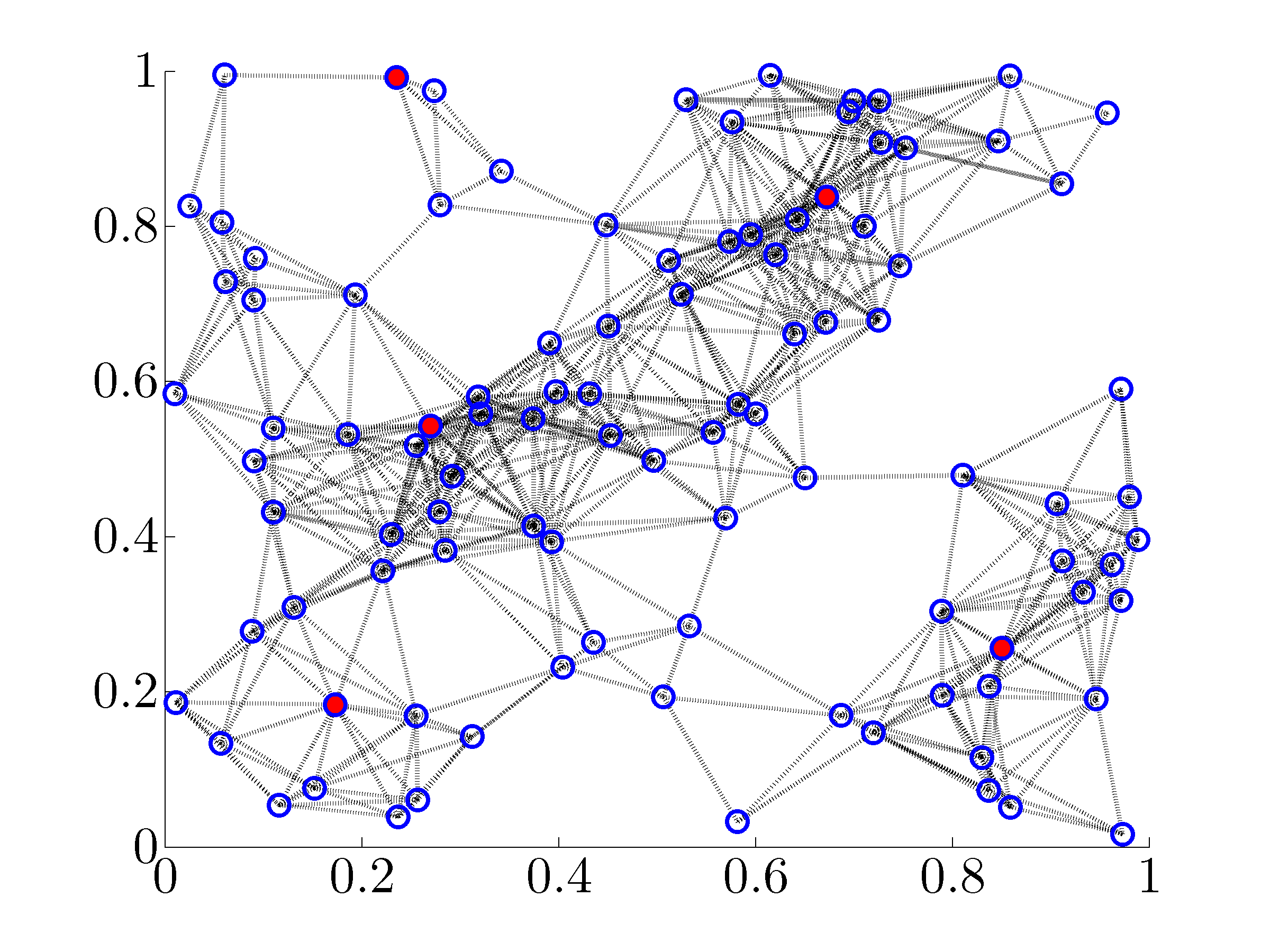}}
        \subfloat[Degree heuristics: $N_l = 5$, $J = 27.8$]
        {\label{fig.random_k5_degree}
        \includegraphics[width=0.45\textwidth]{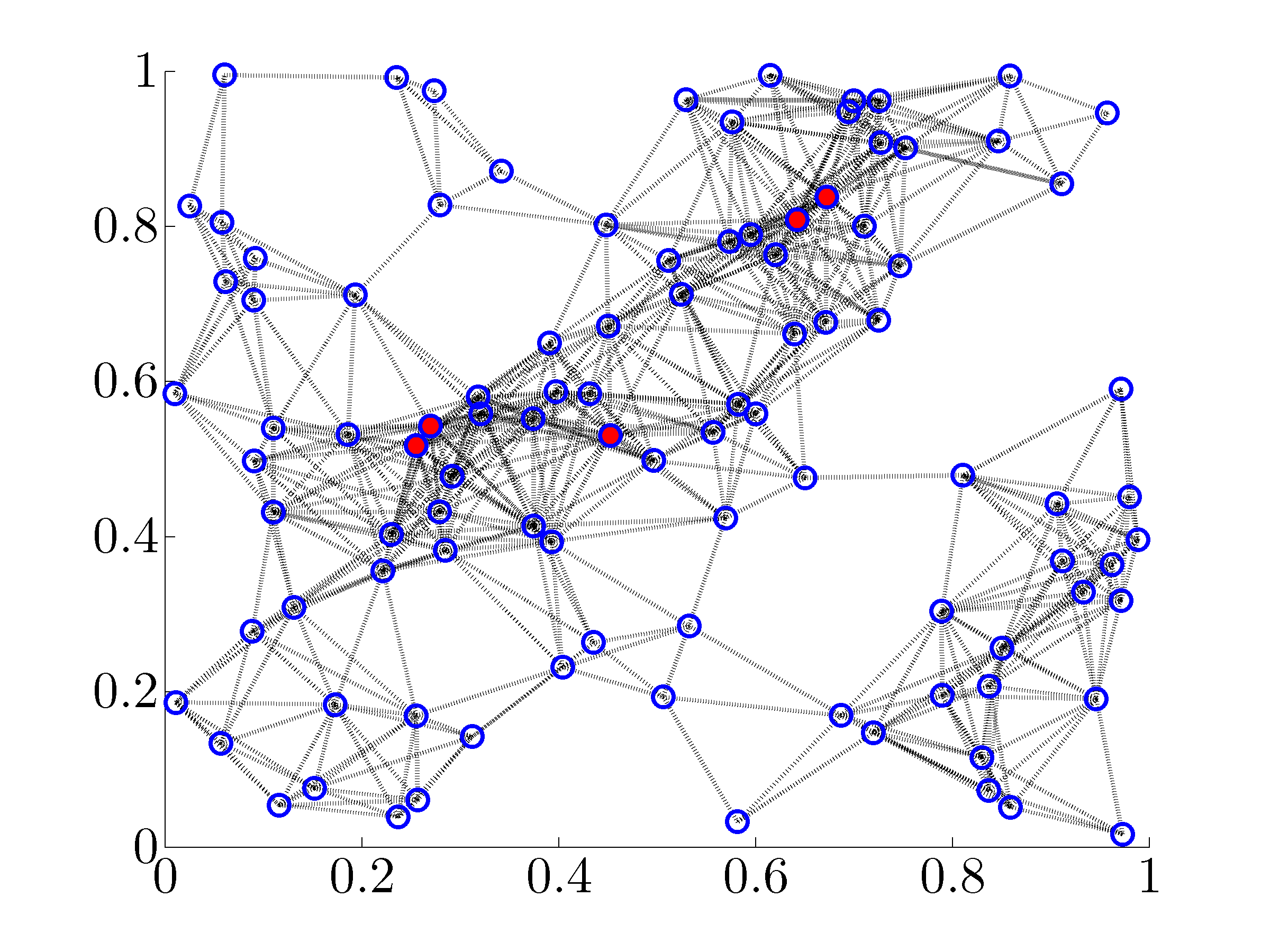}}
        \\
        \subfloat[Greedy algorithm: $N_l = 40$, $J = 9.5$]
        {\label{fig.random_k40_greedy}
        \includegraphics[width=0.45\textwidth]{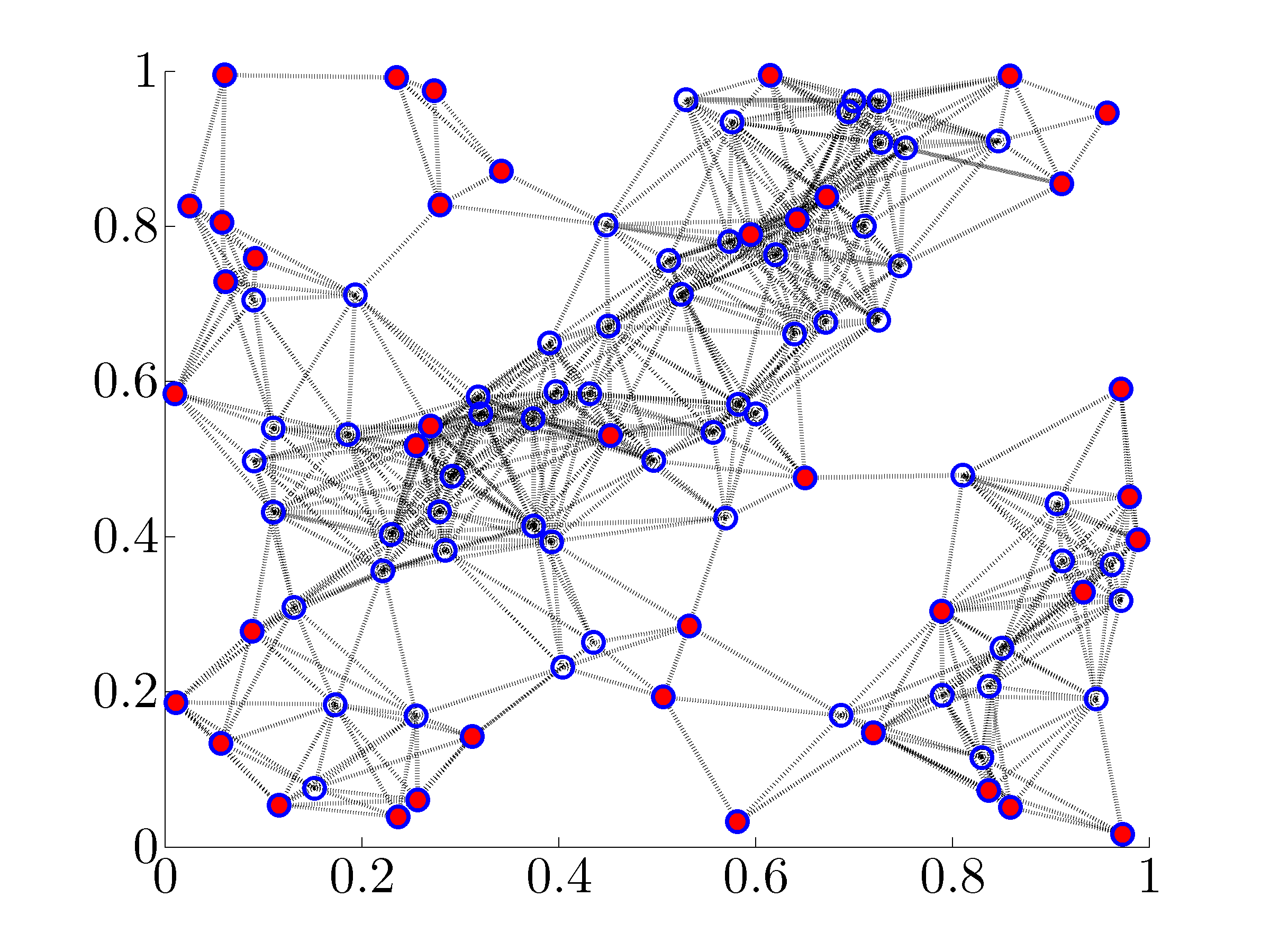}}
        \subfloat[Degree heuristics: $N_l = 40$, $J = 15.0$]
        {\label{fig.random_k40_degree}
        \includegraphics[width=0.45\textwidth]{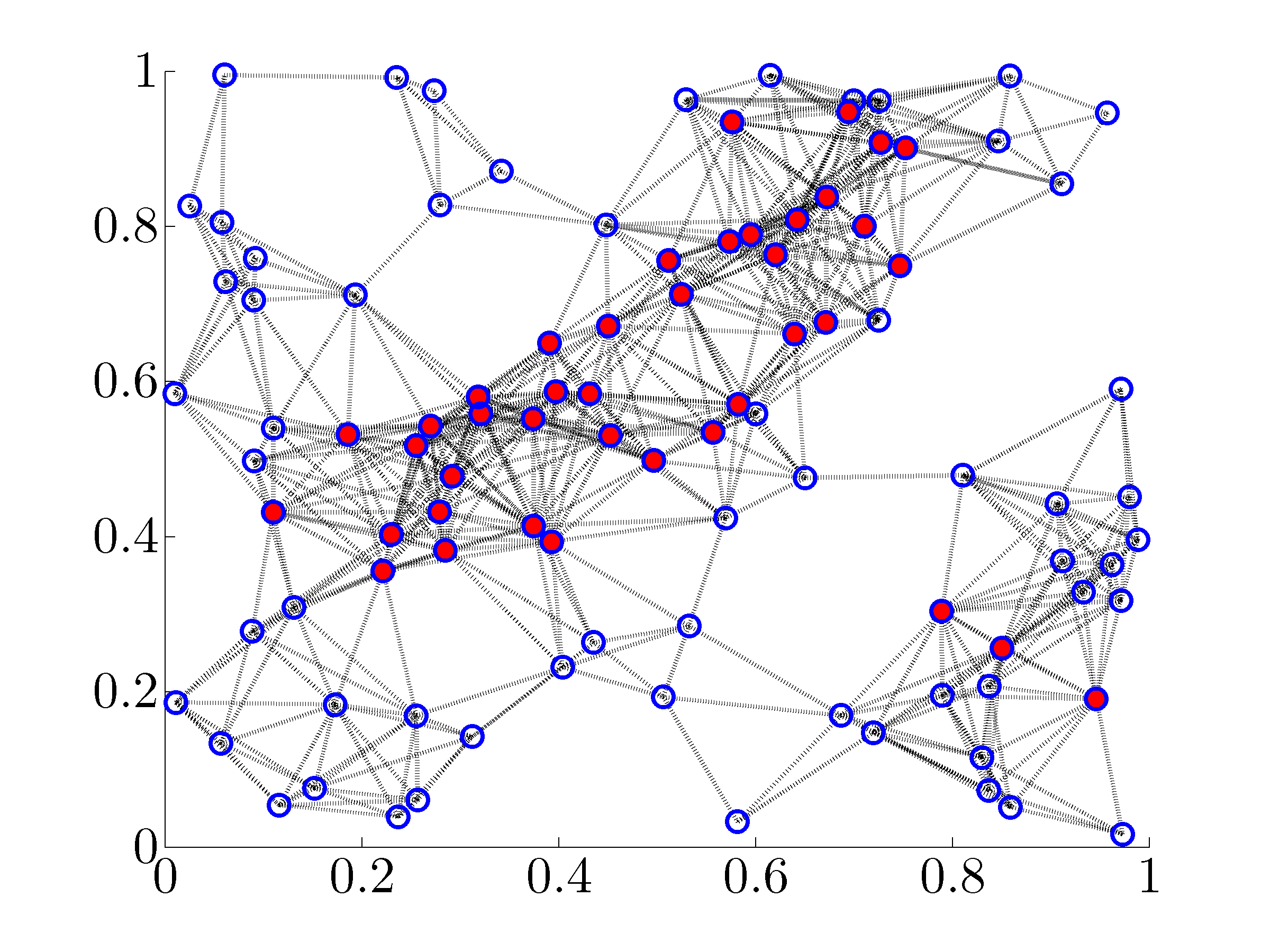}}
      \caption{Selection of leaders~(\tc{red}{$\bullet$}) for the random network example using greedy algorithm in (a) and (c) and using degree heuristics in (b) and (d).}
      \label{fig.random_leaders}
    \end{figure}

\subsubsection{A 2D lattice}

We next consider the noise-corrupted leader selection problem~\eqref{LS1} for a 2D regular lattice with $81$ nodes. Figure~\ref{fig.lattice_bounds} shows lower bounds resulting from convex relaxation~\eqref{CR1} and upper bounds resulting from the greedy algorithm. As in the random network example, the performance gap decreases with $N_l$; see Fig.~\ref{fig.lattice_gap}. For $N_l = 1, \ldots, 40$, the number of swap updates ranges between $1$ and $19$ and the average number of swaps is $10$.

Figure~\ref{fig.lattice_leaders} shows selection of leaders resulting from the greedy algorithm for different choices of $N_l$. For $N_l = 1$, the  center node $(5,5)$ provides the optimal selection of a single leader. As $N_l$ increases, nodes away from the center node are selected; for example, for $N_l = 2$, nodes $\{(3,3)$, $(7,7)\}$ are selected and for $N_l = 3$, nodes $\{(2,6)$, $(6,2)$, $(8,8)\}$ are selected. Selection of nodes farther away from the center becomes more significant for $N_l = 4$ and $N_l = 8$.

As shown in Fig.~\ref{fig.lattice_leaders}, the selection of leaders exhibits symmetry with respect to the center of the lattice. In particular, when $N_l$ is large, almost uniform spacing between the leaders is observed; see Fig.~\ref{fig.lattice_k31} for $N_l = 31$. This is in contrast to the random network example where boundary nodes were selected as leaders; see Fig.~\ref{fig.random_k40_greedy}.

    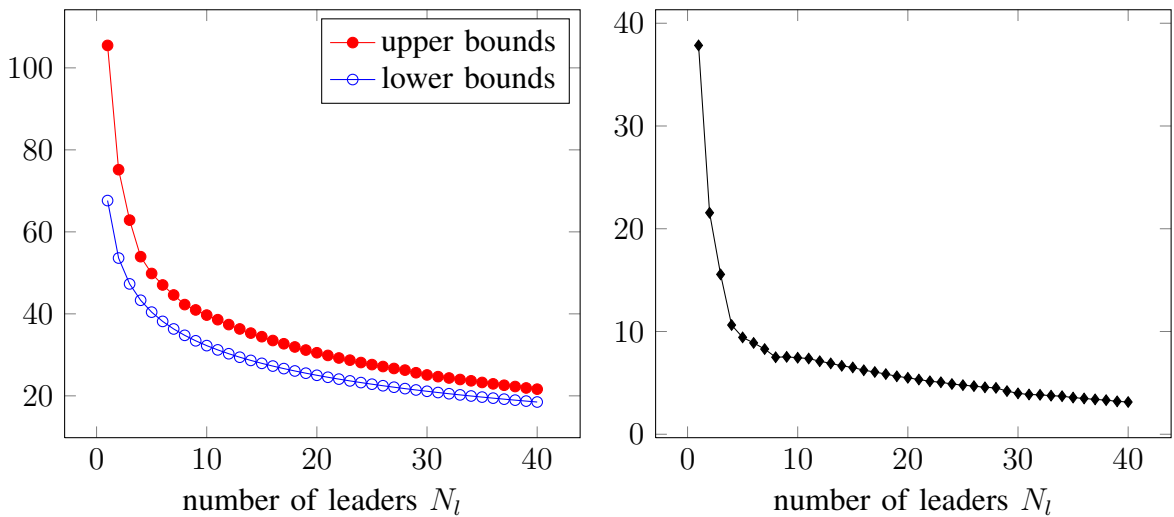
\begin{figure}
    \centering
    \subfloat[Lower and upper bounds resulting from convex relaxation~\eqref{CR1} and greedy algorithm, respectively.]
    {\label{fig.lattice_bounds}
    \begin{tikzpicture}
    \begin{axis}[
    	xlabel=number of leaders $N_l$,
    	ylabel=]
    \addplot[color=red,mark=*]
    	table[x=leaders,y=upper] {lattice_bounds.mat};
    \addplot[color=blue,mark=o]
    	table[x=leaders,y=lower] {lattice_bounds.mat};
    \legend{upper bounds,lower bounds}
    \end{axis}
    \end{tikzpicture}
    }
    \subfloat[The gap between lower and upper bounds.]
    {\label{fig.lattice_gap}
    \begin{tikzpicture}
    \begin{axis}[
    	xlabel=number of leaders $N_l$,
    	ylabel=]
    \addplot[color=black,mark=diamond*]
    	table[x=leaders,y=gap] {lattice_bounds.mat};
    \end{axis}
    \end{tikzpicture}
    }
      \caption{Bounds on the global optimal value for noise-corrupted leader selection~\eqref{LS1} for a 2D lattice.}
      \label{fig.lattice}
    \end{figure}

    \begin{figure}
      \centering
        \subfloat[$N_l = 1$, $J = 105.5$]
        {\label{fig.lattice_k1}
        \includegraphics[width=0.25\textwidth]{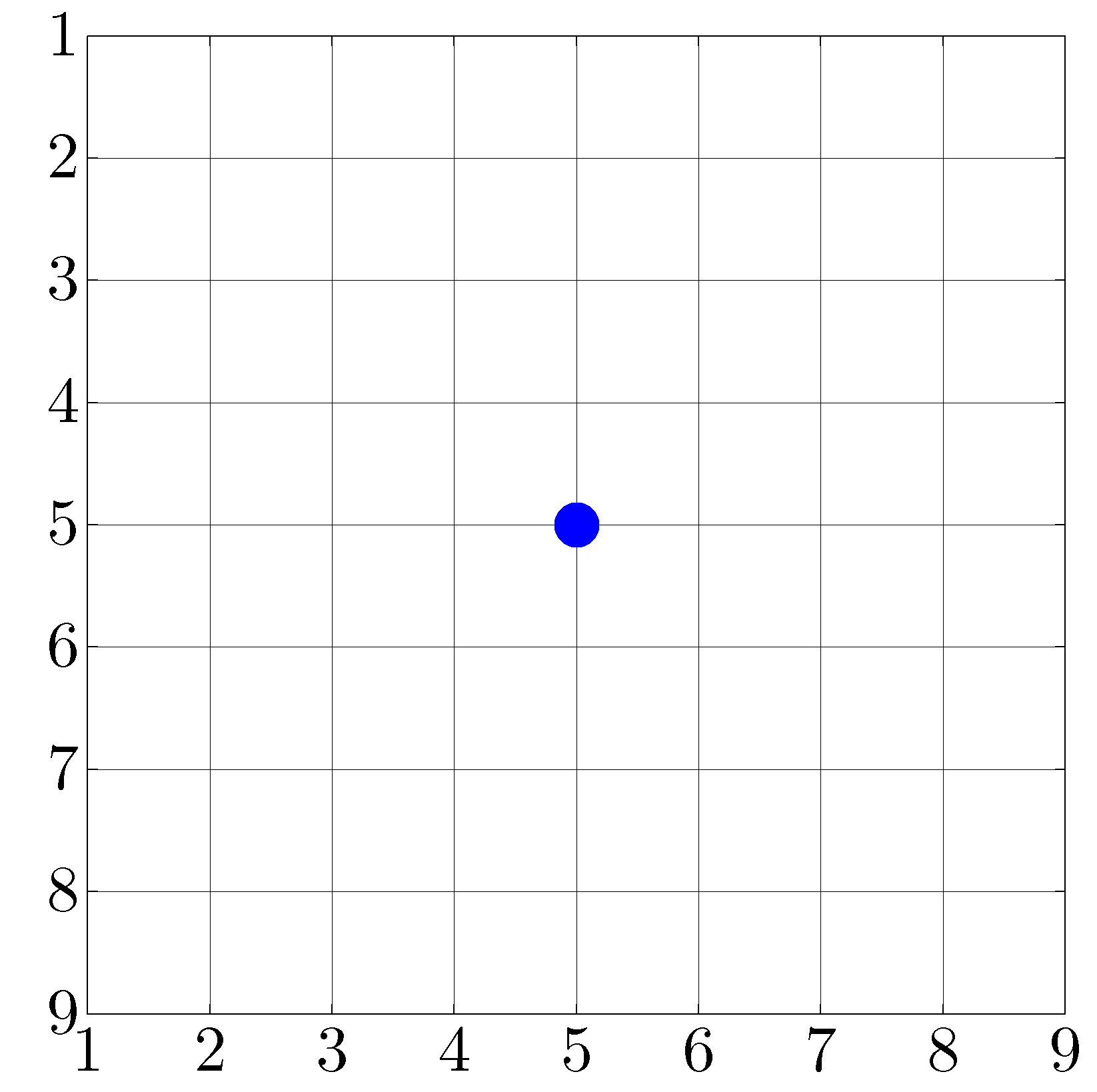}}
        \hspace{1cm}
        \subfloat[$N_l = 2$, $J = 75.2$]
        {\label{fig.lattice_k2}
        \includegraphics[width=0.25\textwidth]{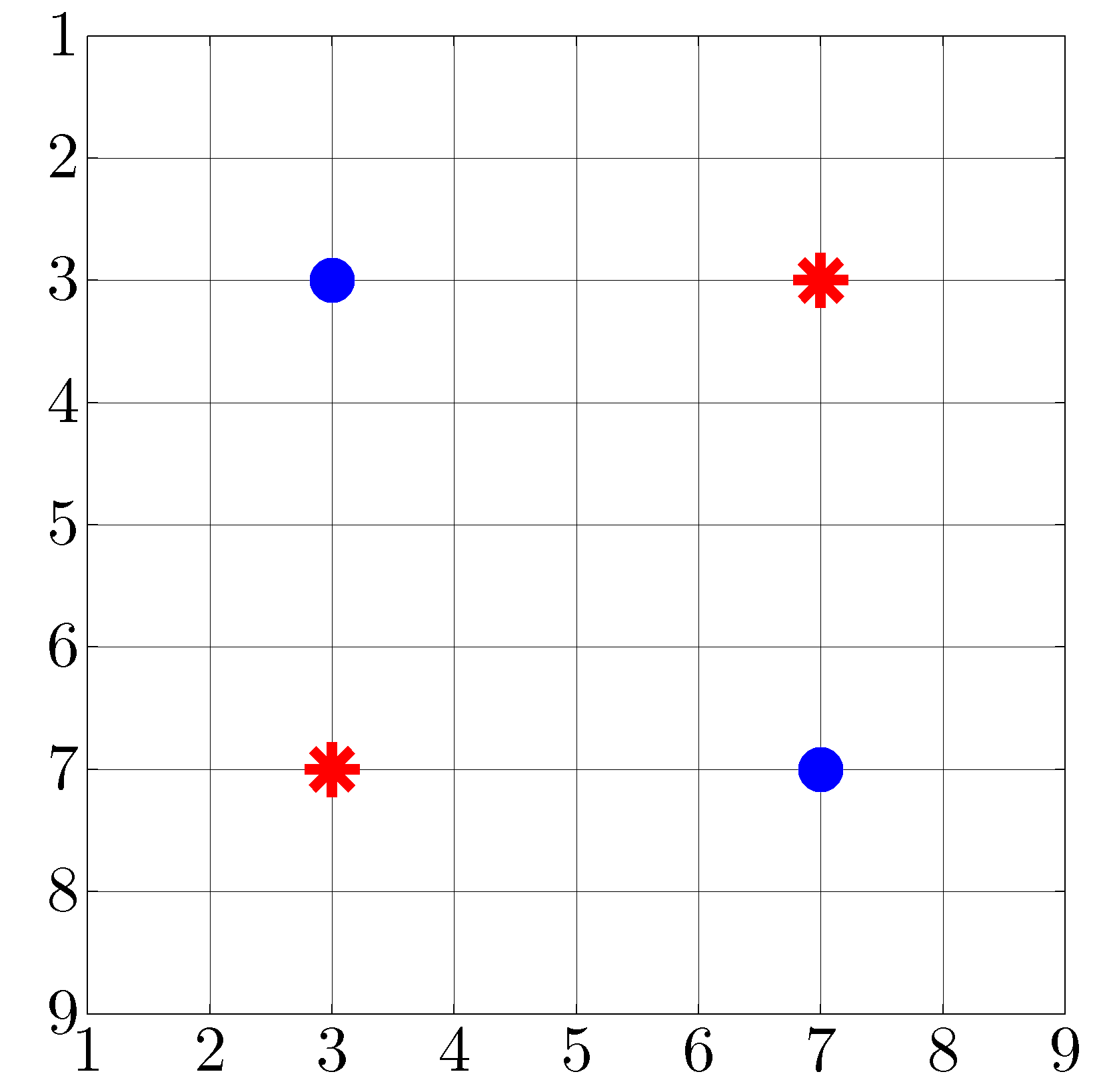}}
        \hspace{1cm}
        \subfloat[$N_l = 3$, $J = 62.9$]
        {\label{fig.lattice_k3}
        \includegraphics[width=0.25\textwidth]{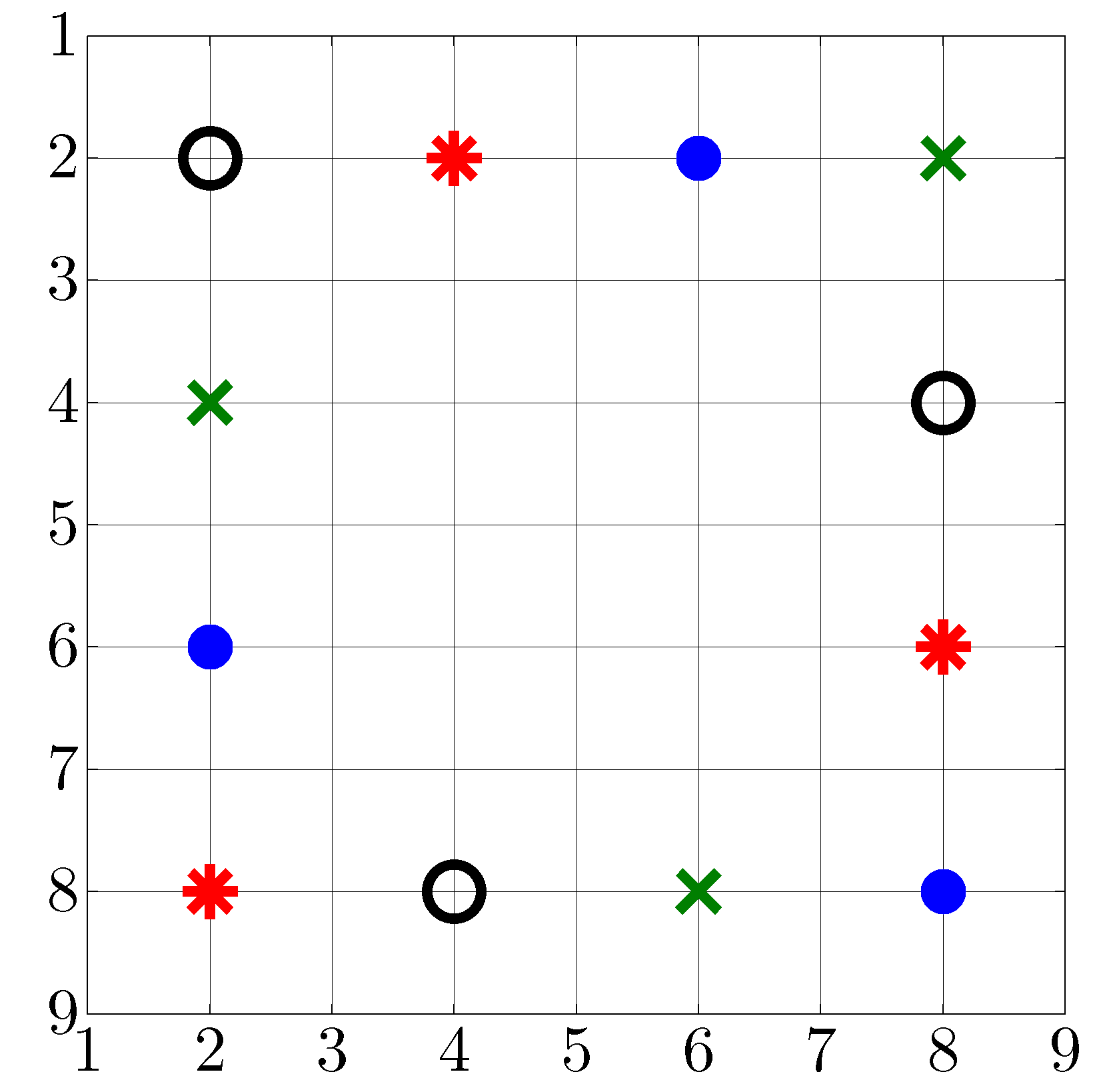}}
        \\
        \subfloat[$N_l = 4$, $J = 53.9$]
        {\label{fig.lattice_k4}
        \includegraphics[width=0.25\textwidth]{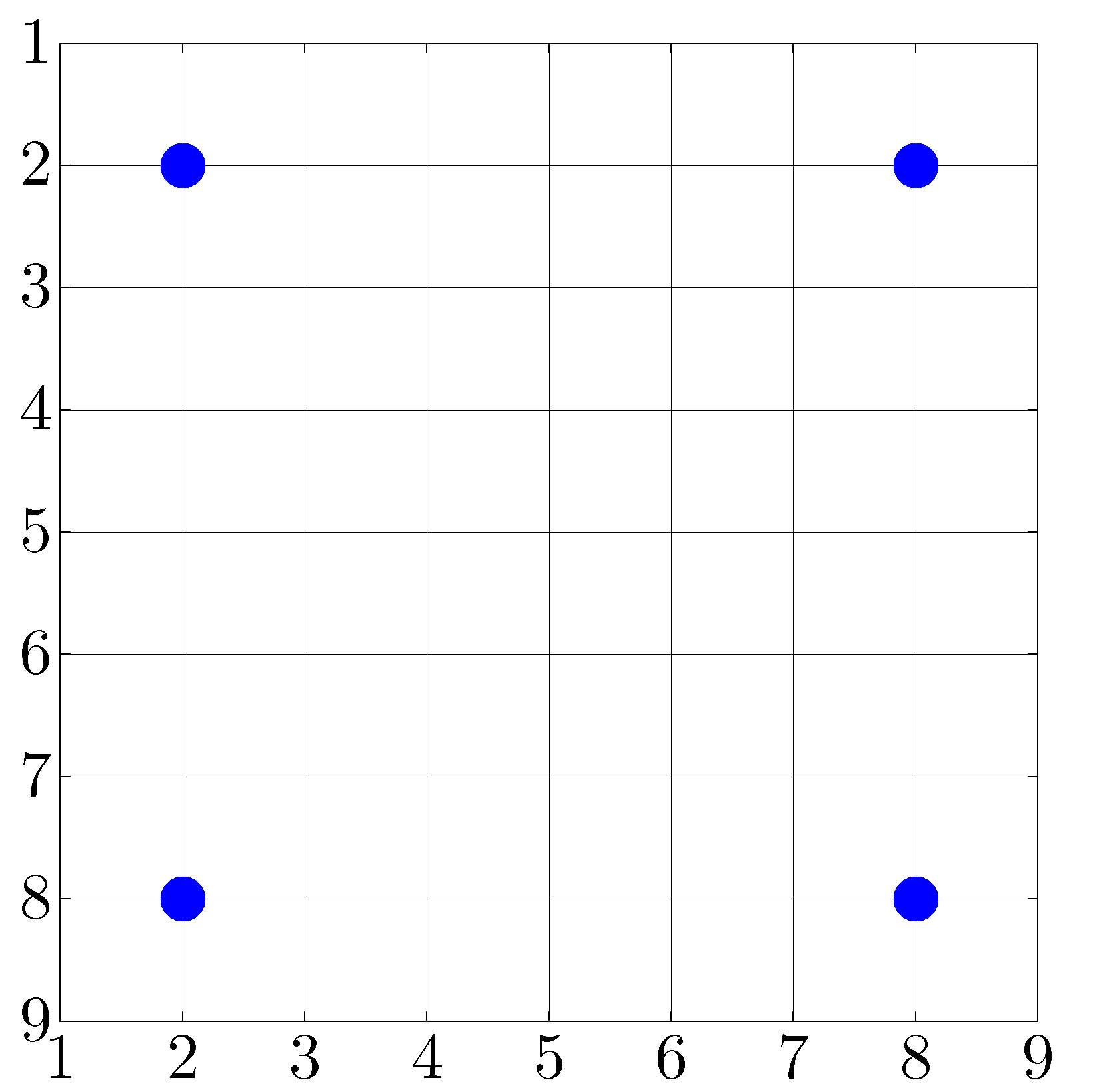}}
        \hspace{1cm}
        \subfloat[$N_l = 8$, $J = 42.3$]
        {\label{fig.lattice_k8}
        \includegraphics[width=0.25\textwidth]{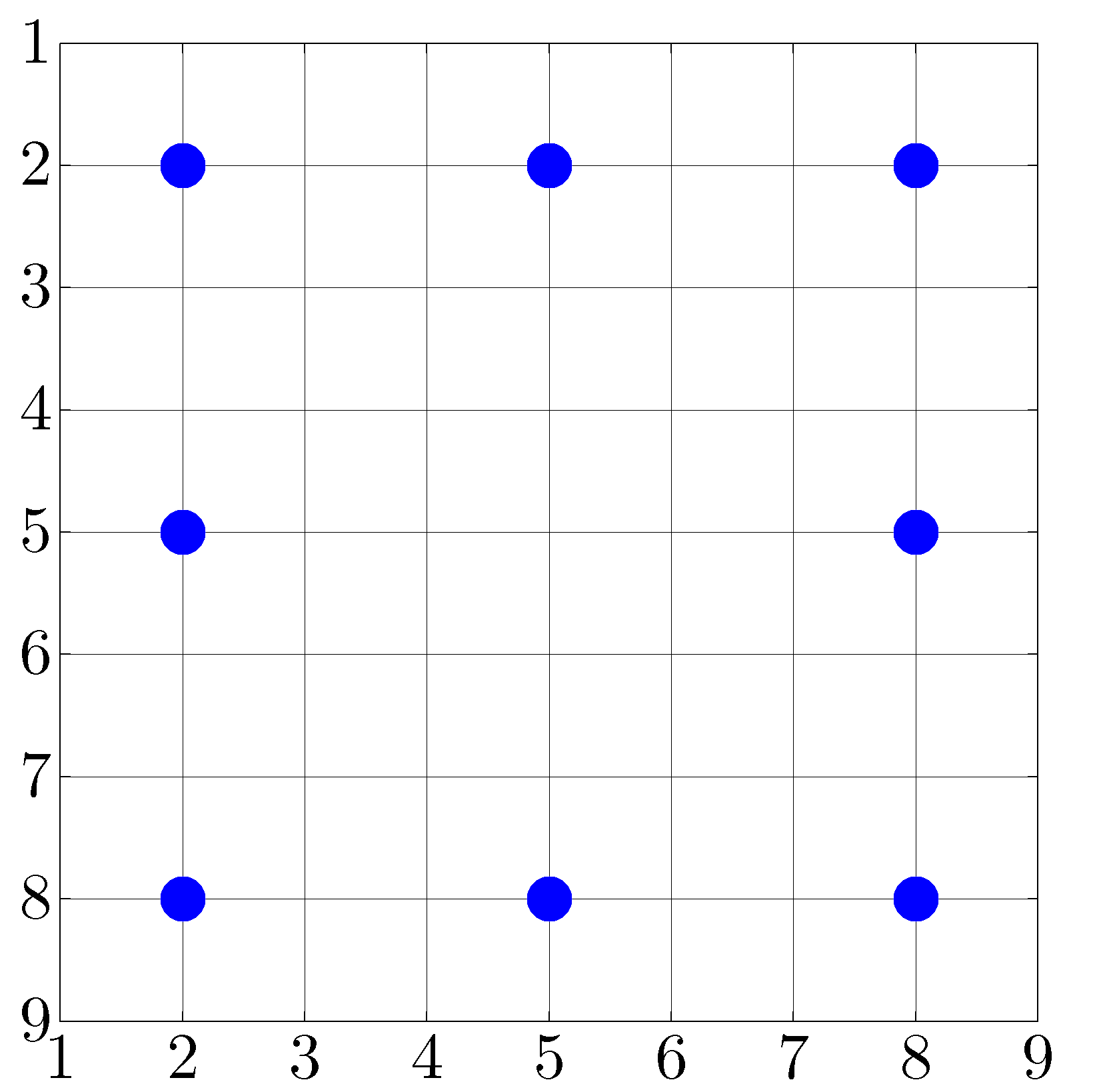}}
        \hspace{1cm}
        \subfloat[$N_l = 31$, $J = 24.7$]
        {\label{fig.lattice_k31}
        \includegraphics[width=0.25\textwidth]{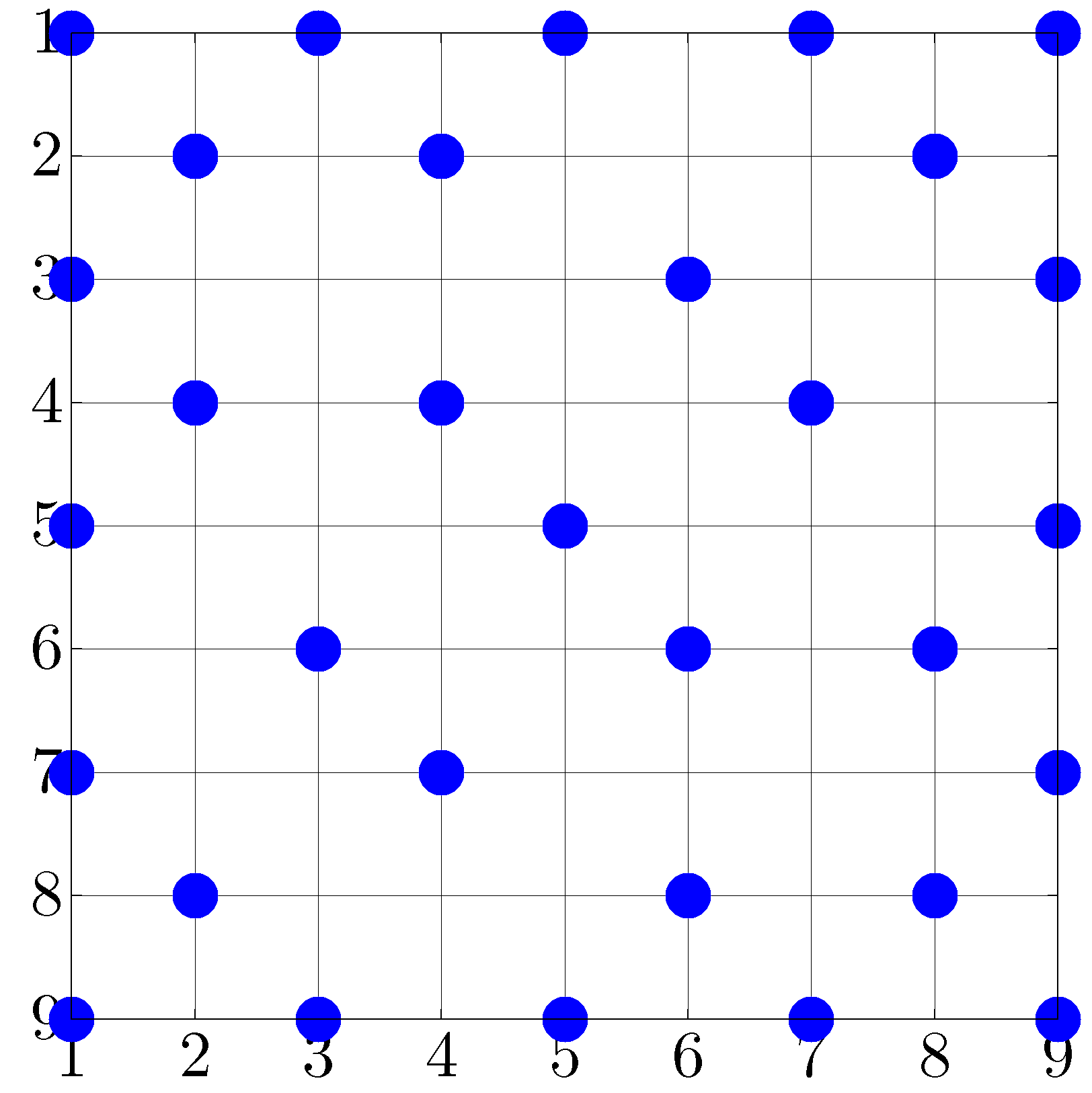}}
      \caption{Selections of noise-corrupted leaders (\tc{blue}{$\bullet$}) obtained using the one-at-a-time algorithm followed by the swap algorithm for a 2D lattice. (b) The two selections of two leaders denoted by (\tc{blue}{$\bullet$}) and (\tc{red}{$*$}) provide the same objective function $J$. (c) The four selections of three leaders denoted by (\tc{blue}{$\bullet$}), (\tc{red}{$*$}), (\tc{darkgreen}{$\times$}), and ($\circ$) provide the same $J$.}
      \label{fig.lattice_leaders}
    \end{figure}

	\vspace*{-2ex}
\section{Lower and upper bounds on global performance: Noise-free leaders}
    \label{sec.noisefree}

We now turn our attention to the noise-free leader selection problem~\eqref{LS2}. An explicit expression for the objective function $J_f$ that we develop in~\eqref{LS2} allows us to identify the source of nonconvexity and to suggest a convex relaxation. The resulting convex relaxation, which comes in the form of a semidefinite program, is used to obtain a lower bound on the global optimal value of~\eqref{LS2}. In order to increase computational efficiency, we employ the alternating direction method of multipliers to decompose the relaxed problem into a sequence of subproblems that can be solved efficiently. We also use the greedy algorithm to compute an upper bound and to identify noise-free leaders. As in the noise-corrupted leader selection problem, we take advantage of low-rank modifications to Laplacian matrices to reduce computational complexity. An example from sensor networks is provided to illustrate performance of the developed approach.

	\vspace*{-2ex}
\subsection{An explicit expression for the objective function $J_f$ in~\eqref{LS2}}
    \label{sec.explicit}

Since the objective function $J_f$ in~\eqref{LS2} is not expressed explicitly in terms of the optimization variable $x$, it is difficult to examine its basic properties (including convexity). In Proposition~\ref{pro.Jf}, we provide an alternative expression for $J_f$ that allows us to establish the lack of convexity and to suggest a convex relaxation of $J_f$.

    \begin{proposition}
    \label{pro.Jf}
For networks with at least one leader, the objective function $J_f$ in the noise-free leader selection problem~\eqref{LS2} can be written as
    \begin{align}
        J_f
        \, &= \,
        \trace
        \left(
        ( L \circ ((\dso - x)(\dso - x)^T) \,+\, \diag \left( x \right) )^{-1}
        \right)
        \, - \,
        \dso^T x
        \label{eq.Jf}
    \end{align}
where $\circ$ denotes the elementwise multiplication of matrices.
    \end{proposition}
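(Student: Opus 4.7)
The plan is a direct computation that reduces the claim to a block-diagonal identity after a suitable permutation. The Boolean structure of $x$ is the key: it lets me interpret the Hadamard product as a masking operation and the $\diag(x)$ term as a ``plug'' filling in identity blocks at leader positions.

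First I would introduce the matrix
$$
M(x) \,\DefinedAs\, L \circ \bigl( (\dso - x)(\dso - x)^T \bigr)
$$
and observe that, because $x_i \in \{0,1\}$, the vector $\dso - x$ is the indicator of followers. Consequently the rank-one matrix $(\dso - x)(\dso - x)^T$ has entry $(i,j)$ equal to $1$ if both $i$ and $j$ are followers and $0$ otherwise, so that entrywise multiplication by $L$ zeros out every row and column of $L$ indexed by a leader while leaving all follower-indexed entries untouched.

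Next I would invoke a permutation $P$ that orders followers before leaders. Under this permutation,
$$
P \, M(x) \, P^T \,=\, \begin{bmatrix} L_f & 0 \\ 0 & 0_{N_l \times N_l} \end{bmatrix},
\qquad
P \, \diag(x) \, P^T \,=\, \begin{bmatrix} 0 & 0 \\ 0 & I_{N_l} \end{bmatrix},
$$
where $L_f$ is precisely the follower-submatrix of $L$ appearing in the definition of $J_f$. Adding the two matrices produces a block-diagonal matrix $\diag(L_f,\, I_{N_l})$. Since the graph is connected and at least one node is a leader, $L_f$ is positive definite (and in particular invertible), so the sum $M(x) + \diag(x)$ is invertible and its inverse is $\diag(L_f^{-1},\, I_{N_l})$ (up to the same permutation).

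Finally, taking the trace yields
$$
\trace\bigl( (M(x) + \diag(x))^{-1} \bigr) \,=\, \trace(L_f^{-1}) \,+\, N_l \,=\, J_f(x) \,+\, \dso^T x,
$$
where the last equality uses the constraint $\dso^T x = N_l$ (equivalently, that $\dso^T x$ simply counts the leaders). Rearranging gives \eqref{eq.Jf}. There is no real obstacle here; the only subtlety is verifying that $L_f$ is indeed invertible, which follows from the connectedness of the graph together with the presence of at least one leader, a fact already noted earlier in the excerpt in connection with the positive definiteness of $L + D_\kappa D_x$.
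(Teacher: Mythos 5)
Your proposal is correct and follows essentially the same route as the paper: both arguments observe that the Hadamard product with $(\dso - x)(\dso - x)^T$ masks out the leader rows and columns of $L$, that adding $\diag(x)$ fills the leader positions with an identity block, and that taking the trace of the resulting block-diagonal inverse gives $\trace(L_f^{-1}) + N_l$. The only cosmetic difference is that you make the reordering explicit via a permutation matrix (and spell out the invertibility of $L_f$), whereas the paper simply partitions $L$ and $x$ conformably with leaders listed first.
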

    \begin{proof}
Let the graph Laplacian $L$ be partitioned into $2 \times 2$ block matrices which respectively correspond to the set of leaders and the set of followers
    \beq
        L \,=\, \tbt{L_l}{L_0}{L_0^T}{L_f}.
    \label{eq.Llf}
    \eeq
Furthermore, let the Boolean-valued vector $x$ be partitioned conformably
    \beq
        x \, \DefinedAs \, \obt{\dso_{N_l}^T}{0_{N_f}^T}^T
    \label{eq.x}
    \eeq
where $\dso_{N_l}$ is an $N_l$-vector with all ones, $0_{N_f}$ is an $N_f$-vector with all zeros, and
    \[
        N_f \, \DefinedAs \, n \, - \, N_l
    \]
is the number of followers. The elementwise multiplication of matrices can be used to set the rows and columns of $L$ that correspond to leaders to zero,
	 \[
        L \circ ((\dso - x)(\dso - x)^T)
        \, = \,
        \tbt{L_l}{L_0}{L_0^T}{L_f}
        \, \circ \,
        \tbt{0_{N_l \times N_l}}{0_{N_l \times N_f}}{0_{N_f \times N_l}}{\dso_{N_f \times N_f}}
        \, = \,
        \tbt{0_{N_l \times N_l}}{0_{N_l \times N_f}}{0_{N_f \times N_l}}{L_f}.
    \]
Using this expression and the definition of the vector $x$ in~\eqref{eq.x} we obtain
    \beq
    \label{eq.Lred}
        ( L \circ ((\dso - x)(\dso - x)^T) \,+\, \diag \left( x \right) )^{-1}
        \,=\,
        \tbt{I_{N_l \times N_l}}{0_{N_l \times N_f}}{0_{N_f \times N_l}}{L_f^{-1}}.
    \eeq
Finally, taking trace of~\eqref{eq.Lred} and subtracting $\dso^T x = N_l$ yields the desired result~\eqref{eq.Jf}.
    \end{proof}

Thus, the noise-free leader selection problem~\eqref{LS2} can be formulated as
    \beq
        \ba{lrcl}
        \underset{x}{\mbox{minimize}}
        &
        J_f(x)
        & = &
        \trace
        \left(
        (
        L \circ ((\dso - x)(\dso - x)^T)
        \,+\,
        \diag \left( x \right)
        )^{-1}
        \right)
        \, - \,
        N_l
        \\
        \mbox{subject to}
        &
        x_i
        & \in &
        \{0,1\},
        ~~~~~
        i \; = \; 1,\ldots,n
        \\
        &
        \dso^T x & = & N_l
        \ea
        \tag{LS2'}
        \label{LS2'}
    \eeq
where the constraint $\dso^T x = N_l$ is used to obtain the expression for the objective function $J_f$ in~\eqref{LS2'}. A counterexample can be provided to demonstrate the lack of convexity of  $J_f (x)$. In fact, it turns out that $J_f$ is not convex even if all $x_i$'s are restricted to the interval $[0,1]$. In Section~\ref{sec.convex_noisefree}, we introduce a change of variables to show that the lack of convexity of $J_f$ can be equivalently recast as a rank constraint.


	\vspace*{-2ex}
\subsection{Reformulation and convex relaxation of~\eqref{LS2'}}
    \label{sec.convex_noisefree}

By introducing a new variable $y \DefinedAs \dso - x$, we can rewrite~\eqref{LS2'} as
    \[
        \ba{lrcl}
        \underset{Y, \; y}{\mbox{minimize}}
        &
        J_f(Y,y)
        & = &
        \trace
        \left(
        ( L \circ Y \,+\, \diag \left( \dso - y \right) )^{-1}
        \right)
        \,-\, N_l
        \\
        \text{subject to}
        &
        Y & = &
        y y^T
        \\
        &
        y_i
        & \in &
        \{0,1\},
        ~~~~~
        i \; = \; 1,\ldots,n
        \\
        &
        \dso^T y & = & N_f.
        \ea
    \]
Since $Y \DefinedAs y y^T$, it follows that $Y$ is a Boolean-valued matrix with $\dso^T Y \dso = N_f^2$. Expressing these implicit constraints as
    \[
        Y_{ij}
        \, \in \,
        \{0,1\},
        ~~~
        i,j \; = \; 1,\ldots,n,
        ~~~
        \dso^T Y \dso \,=\, N_f^2
    \]
leads to the following equivalent formulation
    \[
        \ba{lrcl}
        \underset{Y, \; y}{\mbox{minimize}}
        &
        J_f(Y,y)
        & = &
        \trace
        \left(
        ( L \circ Y \,+\, \diag \left( \dso - y \right) )^{-1}
        \right)
        \,-\, N_l
        \\
        \text{subject to}
        &
        Y & = &
        y y^T
        \\
        &
        y_i
        & \in &
        \{0,1\},
        ~~~~~
        i \; = \; 1,\ldots,n
        \\
        &
        Y_{ij}
        & \in &
        \{0,1\},
        ~~~~~
        i,j \; = \; 1,\ldots,n
        \\
        &
        \dso^T y & = & N_f
        \\
        &
        \dso^T Y \dso & = & N_f^2.
        \ea
    \]
Furthermore, since
    \[
        Y \,=\, y y^T
        ~~\iff~~
        \{ \,
        Y \, \succeq \, 0,
        ~
        {\bf rank} \, (Y) \, = \, 1
        \, \}
    \]
it follows that~\eqref{LS2'} can be expressed as
    \[
        \ba{lrcl}
        \underset{Y, \; y}{\mbox{minimize}}
        &
        J_f(Y,y)
        & = &
        \trace
        \left(
        ( L \circ Y \,+\, \diag \left( \dso - y \right) )^{-1}
        \right)
        \,-\, N_l
        \\
        \text{subject to}
        &
        y_i
        & \in &
        \{0,1\},
        ~~~~~
        i \; = \; 1,\ldots,n
        \\
        &
        Y_{ij}
        & \in &
        \{0,1\},
        ~~~~~
        i,j \; = \; 1,\ldots,n
        \\
        &
        \dso^T y & = & N_f
        \\
        &
        \dso^T Y \dso & = & N_f^2
        \\
        &
        Y & \succeq &
        0,
        ~~~
        {\bf rank} \, ( Y ) \, = \, 1.
        \ea
    \]
By dropping the nonconvex rank constraint and by enlarging the Boolean set $\{0,1\}$ to its convex hull $[0,1]$, we obtain the following convex relaxation of the leader selection problem~\eqref{LS2}
    \beq
    \label{CR2}
    \tag{CR2}
        \ba{lrcl}
        \underset{Y, \; y}{\mbox{minimize}}
        &
        J_f(Y,y)
        & = &
        \trace
        \left(
        ( L \circ Y \,+\, \diag \left( \dso - y \right) )^{-1}
        \right)
        \,-\, N_l
        \\
        \text{subject to}
        &
        y_i
        & \in &
        [0,1],
        ~~~~~
        i \; = \; 1,\ldots,n
        \\
        &
        Y_{ij}
        & \in &
        [0,1],
        ~~~~~
        i,j \; = \; 1,\ldots,n
        \\
        &
        \dso^T y & = & N_f
        \\
        &
        \dso^T Y \dso & = & N_f^2
        \\
        &
        Y & \succeq &
        0.
        \ea
    \eeq

The objective function in~\eqref{CR2} is convex because it is a composition of a convex function $\trace \, (W^{-1})$ of a positive definite matrix $W$ with an affine function $W \DefinedAs L \circ Y + \diag \, (\dso - y)$ of $Y$ and $y$. The constraint set for $y$ is convex because it is the simplex set defined as
    \beq
    \label{eq.simplex_y}
    \tag{C1}
        {\cal C}_1
        \, \DefinedAs \,
        \left\{
        \,
        y
        \left| \right.
        y_i
        \, \in \,
        [0,1],
        ~~
        i \, = \, 1,\ldots,n,
        ~~
        \dso^T y
        \, = \,
        N_f
        \,
        \right\}.
    \eeq
The constraint set for $Y$ is also convex because it is the intersection of the simplex set
    \beq
    \label{eq.simplex_Y}
    \tag{C2}
        {\cal C}_2
        \, \DefinedAs \,
        \left\{\,
        Y
        \left| \right.
        Y_{ij}
        \, \in \,
        [0,1],
        ~~
        i,j \, = \, 1,\ldots,n,
        ~~
        \dso^T Y \dso
        \, = \,
        N_f^2
        \,
        \right\}
    \eeq
and the positive semidefinite cone
    \beq
    \label{eq.psd}
    \tag{C3}
    {\cal C}_3 \, \DefinedAs \, \{ \,Y \left| \right. Y \, \succeq \, 0 \, \}.
    \eeq

Following a similar procedure to that in Section~\ref{sec.convex}, we use Schur complement to cast~\eqref{CR2} as an SDP. Furthermore, since the constraints~\eqref{eq.simplex_y}-\eqref{eq.psd} are {\em decoupled\/} over $y$ and $Y$, we exploit this {\em separable\/} structure in Section~\ref{sec.ADMM} and develop an efficient algorithm to solve~\eqref{CR2}.

	\vspace*{-2ex}
\subsection{Solving the convex relaxation~\eqref{CR2} using ADMM}
    \label{sec.ADMM}

For small networks (e.g., $n \leq 30$), the convex relaxation~\eqref{CR2} can be solved using general-purpose SDP solvers, with computational complexity of order $n^6$. We next exploit the separable structure of the constraint set~\eqref{eq.simplex_y}-\eqref{eq.psd} and develop an alternative approach that is well-suited for large problems. In our approach, we use the alternating direction method of multipliers~(ADMM) to decompose~\eqref{CR2} into a sequence of subproblems which can be solved with computational complexity of order $n^3$.

Let $\phi_1(y)$ be the indicator function of the simplex set in~\eqref{eq.simplex_y},
    \[
        \phi_1(y)
        \, \DefinedAs \,
        \left\{
        \ba{ll}
        0, &  y \, \in \, {\cal C}_1
        \\
        \infty, &  y \, \notin \, {\cal C}_1.
        \ea
        \right.
    \]
Similarly, let $\phi_2(Y)$ and $\phi_3(Y)$ be the indicator functions of the simplex set~\eqref{eq.simplex_Y} and the positive semidefinite cone~\eqref{eq.psd}, respectively. Then the convex relaxation~\eqref{CR2} can be expressed as a sum of convex functions
    \beq
    \non
        \underset{Y, \; y}{\mbox{minimize}}
        ~~
        J_f(Y,y)
        \,+\,
        \phi_1(y)
        \,+\,
        \phi_2(Y)
        \,+\,
        \phi_3(Y).
    \eeq

We now introduce additional variables $\{Z,z\}$ and rewrite~\eqref{CR2} as
    \beq
    \label{eq.ADMM_form}
        \ba{ll}
        \underset{Y, \; y; \; Z, \; z}{\mbox{minimize}}
        &
        f(Y,y)
        \,+\,
        g(Z,z)
        \\[0.15cm]
        \text{subject to}
        &
        Z \,=\, Y,
        ~~~
        z \,=\, y
        \ea
    \eeq
where
    \[
    \ba{rrl}
        f(Y,y)
        & \!\! \DefinedAs \!\! &
        J_f(Y,y)
        \,+\,
        \phi_3(Y)
   	\\
	g(Z,z)
        & \!\! \DefinedAs \!\! &
        \phi_1(z) \,+\, \phi_2(Z).
        \ea
    \]
In~\eqref{eq.ADMM_form}, $f$ and $g$ are two independent functions over two different sets of variables $\{Y,y\}$ and $\{Z,z\}$, respectively. As we describe below, this separable feature of the objective function in~\eqref{eq.ADMM_form} in conjunction with the separability of the constraint set~\eqref{eq.simplex_y}-\eqref{eq.psd} is amenable to the application of the ADMM algorithm.

We form the augmented Lagrangian associated with~\eqref{eq.ADMM_form},
    \[
        {\cal L}_{\rho} (Y,y; Z,z; \Lambda,\lambda)
        \,=\,
        f(Y,y) \,+\, g(Z,z)
        \,+\,
        \langle \Lambda, Y - Z \rangle
        \,+\,
        \lambda^T (y - z)
        \,+\,
        \dfrac{\rho}{2} \,
        \| Y - Z \|_F^2
        \,+\,
        \dfrac{\rho}{2} \,
        \| y - z \|_2^2
    \]
where $\Lambda$ and $\lambda$ are Lagrange multipliers, $\rho$ is a positive scalar, $\langle \cdot, \cdot \rangle$ is the inner product of two matrices, $\langle M_1, M_2 \rangle \DefinedAs\trace(M_1^T M_2)$, and $\|\cdot\|_F$ is the Frobenius norm. To find the solution of~\eqref{eq.ADMM_form}, the ADMM algorithm uses a sequence of iterations
    \begin{subequations}
    \label{eq.ADMM_twoblock}
    \begin{align}
    \label{eq.Yy_update}
    (Y,y)^{k+1}
    \;&\DefinedAs\;
    \underset{Y, \; y}{\operatorname{arg \, min}}
    \;
    {\cal L}_\rho
    (Y,y; Z^k,z^k; \Lambda^k,\lambda^k)
    \\
    \label{eq.Zz_update}
    (Z,z)^{k+1}
    \;&\DefinedAs\;
    \underset{Z, \; z}{\operatorname{arg \, min}}
    \;
    {\cal L}_\rho
    (Y^{k+1},y^{k+1}; Z,z; \Lambda^k,\lambda^k)
    \\
    \Lambda^{k+1}
    \;&\DefinedAs\;
    \Lambda^{k}
    \,+\,
    \rho \,
    (Y^{k+1} \,-\, Z^{k+1})
    \\[-0.1cm]
    \lambda^{k+1}
    \;&\DefinedAs\;
    \lambda^{k}
    \,+\,
    \rho \,
    (y^{k+1} \,-\, z^{k+1})
    \end{align}
    \end{subequations}
until the primal and dual residuals are sufficiently small~\cite[Section 3.3]{boyparchupeleck11}
    \[
        \ba{rcl}
        \| Y^{k+1} - Z^{k+1} \|_F
        \,+\,
        \| y^{k+1} - z^{k+1} \|_2
        & \leq &
        \epsilon
        \\[0.1cm]
        \| Z^{k+1} - Z^{k} \|_F
        \,+\,
        \| z^{k+1} - z^{k} \|_2
        & \leq &
        \epsilon.
        \ea
    \]
The convergence of ADMM for convex problems is guaranteed under fairly mild conditions~\cite[Section 3.2]{boyparchupeleck11}. Furthermore, for a fixed value of parameter $\rho$, a linear convergence rate of ADMM has been established in~\cite{honluo13}. In practice, the convergence rate of ADMM can be improved by appropriately updating $\rho$ to balance the primal and dual residuals; see~\cite[Section 3.4.1]{boyparchupeleck11}.

In what follows, we show that the $(Y,y)$-minimization step~\eqref{eq.Yy_update} amounts to the minimization of a smooth convex function over the positive semidefinite cone ${\cal C}_3$. We use a gradient projection method to solve this problem. On the other hand, the $(Z,z)$-minimization step~\eqref{eq.Zz_update} amounts to projections on simplex sets ${\cal C}_1$ and ${\cal C}_2$, both of which can be computed efficiently.

\subsubsection{$(Y,y)$-minimization step}

Using completion of squares, we express the $(Y,y)$-minimization problem~\eqref{eq.Yy_update} as
    \beq
    \label{eq.Yy_prob}
        \ba{ll}
        \underset{Y, \; y}{\mbox{minimize}}
        &
        h(Y,y) \,=\,
        \trace \left( ( L \circ Y \,+\, \diag \left( \dso - y \right) )^{-1} \right)
        \, + \,
        \dfrac{\rho}{2}
        \,
        \| Y - U^k \|_F^2
        \, + \,
        \dfrac{\rho}{2}
        \,
        \| y - u^k \|_2^2
        \\
        \mbox{subject to}
        &
        Y \, \succeq \, 0
        \ea
    \eeq
where $U^k \DefinedAs Z^k - (1/\rho)\Lambda^k$ and $u^k \DefinedAs z^k - (1/\rho)\lambda^k$. A gradient projection method is used to minimize the smooth convex function $h$ in~\eqref{eq.Yy_prob} over the positive semidefinite cone $Y \succeq 0$. This iterative descent scheme guarantees feasibility in each iteration~\cite[Section 2.3]{ber99} by updating $Y$ as follows
    \beq
    \label{eq.Yproj}
        Y^{r+1}
        \, = \,
        Y^r
        \,+\,
        s^r \,
        (\bar{Y}^r  -  Y^r).
    \eeq
Here, the scalar $s^r$ is the stepsize of the $r$th gradient projection iteration and
    \beq
    \label{eq.proj_Y}
        \bar{Y}^r \,\DefinedAs\, [ \, Y^r \,-\, \nabla_Y h \, ]^+
    \eeq
is the projection of the matrix $Y^r - \nabla_Y h$ on the positive semidefinite cone ${\cal C}_3$. This projection can be obtained from an eigenvalue decomposition by replacing the negative eigenvalues with zero. On the other hand, since no constraints are imposed on $y$, it is updated using standard gradient descent
    \[
        y^{r+1} \, = \, y^r \, - \, s^r \, \nabla_y h
    \]
where the stepsize $s^r$ is the same as in~\eqref{eq.Yproj} and it is obtained, e.g., using the Armijo rule~\cite[Section 2.3]{ber99}. Here, we provide expressions for the gradient direction
    \beq
    \label{eq.grad_Yy}
        \ba{rcl}
        \nabla_Y h  
        & = &
        - \, ( L \circ Y \,+\, \diag \left( \dso - y \right) )^{-2}
        \, \circ \, L
        \,+\,
        \rho \, (Y \,-\, U^k)
        \\
        \nabla_y h 
        & = &
        \diag \left( ( L \circ Y \,+\, \diag \left( \dso - y \right) )^{-2} \right)
        \,+\,
        \rho \, (y \,-\, u^k)
        \ea
    \eeq
and note that the KKT conditions for~\eqref{eq.Yy_prob} are given by
    \[
        Y \, \succeq \, 0,~~~
        \nabla_Y h \, \succeq \, 0,~~~
        \langle Y, \, \nabla_Y h \rangle  \, = \, 0,~~~
        \nabla_y h \, = \, 0.
    \]
Thus, the gradient projection method terminates when $(Y^r,y^r)$ satisfies
    \[
        Y^r \, \succeq \, 0,~~~
        \nabla_Y h(Y^r) \, \succeq \, 0,~~~
        \langle Y^r, \, \nabla_Y h (Y^r) \rangle \, \leq \, \epsilon,~~~
        \| \nabla_y h (y^r) \|_2 \, \leq \, \epsilon.
    \]

Finally, we note that each iteration of the gradient projection method takes $O(n^3)$ operations. This is because the projection~\eqref{eq.proj_Y} on the positive semidefinite cone requires an eigenvalue decomposition and the gradient direction~\eqref{eq.grad_Yy} requires computation of a matrix inverse.





\subsubsection{$(Z,z)$-minimization step}

We now turn to the $(Z,z)$-minimization problem~\eqref{eq.Zz_update}, which can be expressed as
    \beq
    \label{eq.Zz_prob}
        \ba{ll}
        \underset{Z, \; z}{\mbox{minimize}}
        &
        \dfrac{\rho}{2}
        \,
        \| z \,-\, v^k \|_2^2
        ~ + ~
        \dfrac{\rho}{2}
        \,
        \| Z \,-\, V^k \|_F^2
        \\
        \mbox{subject to}
        &
        z \, \in \, {\cal C}_1,
        ~~~
        Z \, \in \, {\cal C}_2
        \ea
    \eeq
where $V^k \DefinedAs Y^{k+1} + (1/\rho)\Lambda^k$ and $v^k \DefinedAs y^{k+1} + (1/\rho)\lambda^k$. The separable structure of~\eqref{eq.Zz_prob} can be used to decompose it into two independent problems
    \begin{subequations}
    \label{eq.Zz_twoprob}
    \begin{align}
    \label{eq.z_prob}
        \underset{z \, \in \, {\cal C}_1}{\mbox{minimize}}
        &~~
        \dfrac{\rho}{2}
        \,
        \| z \,-\, v^k \|_2^2
    \\
    \label{eq.Z_prob}
        \underset{Z \, \in \, {\cal C}_2}{\mbox{minimize}}
        &~~
        \dfrac{\rho}{2}
        \,
        \| Z \,-\, V^k \|_2^2
    \end{align}
    \end{subequations}
whose solutions are determined by projections of $v^k$ and $V^k$ on \mbox{convex sets ${\cal C}_1$  and ${\cal C}_2$, respectively.}

In what follows, we focus on the projection on ${\cal C}_1$; the projection on ${\cal C}_2$ can be obtained in a similar fashion. For $N_f = 1$, ${\cal C}_1$ becomes a {\em probability simplex\/},
    \[
        {\cal C}_1
        \, = \,
        \left\{
        \,
        z
        \left| \right.
        z_i
        \, \in \,
        [0,1],
        ~~
        i \, = \, 1,\ldots,n,
        ~~
        \dso^T z
        \, = \,
        1
        \,
        \right\}
    \]
and customized algorithms for projection on probability simplex can be used; e.g., see~\cite{cheye11} and~\cite[Section 6.2.5]{parboy13}. Since for $N_f \geq 2$ these algorithms are not applicable, we view the simplex ${\cal C}_1$ as the intersection of the hyperplane $\{ \, z \,| ~ \dso^T z = N_f \,\}$ and the unit box $\{ \, z \,| ~ 0 \leq z \leq \dso \}$ and employ an ADMM-based alternating projection method in conjunction with simple analytical expressions developed in~\cite[Section 6.2]{parboy13}; see Appendix~\ref{app.quad_ADMM} for details.

	\vspace*{-2ex}
\subsection{Greedy algorithm to obtain an upper bound}

Having determined a lower bound on the global optimal value of~\eqref{LS2} by solving the convex relaxation~\eqref{CR2}, we next quantify the performance gap and provide a computationally attractive way for selecting leaders. As in the noise-corrupted case, we use the one-leader-at-a-time algorithm followed by the swap algorithm to compute an upper bound. Rank-$2$ modifications to the resulting Laplacian matrices allow us to compute the inverse of $L_f$ using $O(n^2)$ operations.

Let $[L]_{i}$ be the principal submatrix of $L$ obtained by deleting its $i$th row and column. To select the first leader, we compute
    \[
        J_1^i
        \,=\,
        \trace
        \left(
        [L]_i^{-1}
        \right),
        ~~~
        i = 1,\ldots,n
    \]
and assign the node, say $v_1$, that achieves the minimum value of $\{J_1^i\}$. After choosing $s$ noise-free leaders $\nu = \{v_1,\ldots,v_s\}$, we compute
    \[
        J_{s+1}^i
        \,=\,
        \trace
        \left(
        [L]_{\nu \, \cup \, i}^{-1}
        \right),
        ~~~
        i \notin \nu
    \]
and choose node $v_{s+1}$ that achieves the minimum value of $\{ J_{s+1}^i \}$. We repeat this procedure until all $N_l$ leaders are selected.

For $N_l \ll n$, the one-at-a-time greedy algorithm that ignores the low-rank structure requires $O(n^4 N_l)$ operations. We next exploit the low-rank structure to reduce complexity to $O(n^3 N_l)$ operations. The key observation is that the difference between two {\em consecutive\/} principal submatrices $[L]_i$ and $[L]_{i+1}$ leads to a rank-$2$ matrix. \mbox{To see this, let us partition the Laplacian matrix as}
    \[
        L
        \,=\,
        \left[
        \ba{cccc}
        L_1        &  c_i  & c_{i+1}   & L_0        \\
        c_i^T      &  a_i  & d_i       & b_i^T      \\
        c_{i+1}^T  &  d_i  & a_{i+1}   & b_{i+1}^T  \\
        L_0^T      &  b_i  & b_{i+1}   & L_2
        \ea
        \right]
        ~
        \ba{l}
        \leftarrow ~ \text{$i$th row}
        \\
        \leftarrow ~ \text{$(i+1)$th row}
        \ea
    \]
where the $i$th column of $L$ consists of $\{ c_i$, $a_i$, $d_i$, $b_i\}$ and the $(i+1)$th column consists of $\{ c_{i+1}$, $d_i$, $a_{i+1}$, $b_{i+1} \}$. Deleting the $i$th row and column and deleting the $(i+1)$th row and column respectively yields
    \beq
    \label{eq.rank2_noisefree}
        [L]_i
        \,=\,
        \thbth{L_1}{c_{i+1}}{L_0}{c_{i+1}^T}{a_{i+1}}{b_{i+1}^T}{L_0^T}{b_{i+1}}{L_2},
        ~~~
        [L]_{i+1}
        \,=\,
        \thbth{L_1}{c_i}{L_0}{c_i^T}{a_i}{b_i^T}{L_0^T}{b_i}{L_2}.
    \eeq
Thus, the difference between two consecutive principal submatrices of $L$ can be written as
    \[
        [L]_{i+1}
        \,-\,
        [L]_i
        \, = \,
        e_i \xi_i^T
        \,+\,
        \xi_i e_i^T
    \]
where $e_i$ is the $i$th unit vector and
    $
        \xi_i^T
        \DefinedAs
        [\, c_{i}^T - c_{i+1}^T ~~ \frac{1}{2}(a_{i} - a_{i+1}) ~~ b_{i}^T - b_{i+1}^T \,].
    $
Hence, once $[L]_{i}^{-1}$ is determined, computing $[L]_{i+1}^{-1}$ via matrix inversion lemma takes $O(n^2)$ operations; cf.~\eqref{eq.rank2}. The selection of the first leader requires one matrix inverse and $n-1$ times rank-$2$ updates, resulting in $O(n^3)$ operations. For $N_l \ll n$, the total cost of the greedy algorithm is thus reduced to $O(n^3 N_l)$ operations.

As in Section~\ref{sec.swap}, after selecting $N_l$ leaders using the one-leader-at-a-time algorithm we employ the swap algorithm to further improve performance. Similar to the noise-corrupted case, a swap between a noise-free leader and a follower leads to a rank-$2$ modification to the reduced Laplacian $L_f$. Thus, after a swap, the evaluation of the objective function $J_f$ can be carried out with $O(n^2)$ operations. If $L$ is partitioned as in~\eqref{eq.Llf}, a swap between leader $i$ and follower $N_l + j$ amounts to replacing (i) the $j$th row of $L_f$ with the $i$th row of $L_0$; and (ii) the $i$th column of $L_f$ with the $i$th column of $L_0^T$. Thus, a swap introduces a rank-$2$ modification to $L_f$.

	\vspace*{-2ex}
\subsection{An example}

We consider a network with $200$ randomly distributed nodes in a C-shaped region within a unit square; see Fig.~\ref{fig.Cshape_leaders}. A pair of nodes communicates with each other if their distance is not greater than $0.1$ units. This example was used in~\cite{sritewluo08} as a benchmark for testing algorithms for the sensor localization problem. Lower and upper bounds on the global optimal value of the noise-free leader selection problem~\eqref{LS2} are computed using approaches developed in this section. For $N_l = 1,\ldots,10$, the number of the swap updates ranges from $1$ to $16$ and the average number of swaps is $8$.

As shown in Fig.~\ref{fig.Cshape_bounds}, the gap between lower and upper bounds is a decreasing function of $N_l$. The greedy algorithm selects leaders that have large degrees and that are geographically far from each other; see Fig.~\ref{fig.Cshape_leaders}. Similar leader selection strategies have been observed in the noise-corrupted case of Section~\ref{sec.example}. For the C-shaped network, we note that the noise-free and noise-corrupted formulations lead to almost identical selection of leaders.

    \begin{figure}
      \centering
        \subfloat[Lower and upper bounds resulting from convex relaxation~\eqref{CR2} and greedy algorithm, respectively.]
        {\label{fig.Cshape_noisefree_bounds}
        \begin{tikzpicture}
    	\begin{axis}[xlabel=number of leaders $N_l$, mark size=3pt]
    	\addplot[color=red,mark=*] coordinates {
    		(1,337.3832)
    		(2,140.7931)
    		(3,92.5191)
    		(4,59.1369)
    		(5,53.5477)
    		(6,47.3370)
    		(7,42.2660)
    		(8,40.5564)
    		(9,38.7599)
    		(10,37.4680)
    	};
        \addlegendentry{upper bounds}
    	\addplot[color=blue,mark=o] coordinates {
       (1,43.8347)
       (2,37.9354)
       (3,34.6638)
       (4,32.1978)
       (5,30.0475)
       (6,28.0940)
       (7,26.3638)
       (8,24.5237)
       (9,22.7038)
       (10,20.9197)
    	};
        \addlegendentry{lower bounds}
    	\end{axis}
        \end{tikzpicture}}
        ~~~
        \subfloat[The gap between lower and upper bounds.]
        {\label{fig.Cshape_noise_corrupted_bounds}
                \begin{tikzpicture}
    	\begin{axis}[xlabel=number of leaders $N_l$,mark size=4pt]
    	\addplot[color=black,mark=diamond*] coordinates {
    		(1,293.5485)
    		(2,102.8577)
    		(3,57.8553)
    		(4,26.9391)
    		(5,23.5002)
    		(6,19.2430)
    		(7,15.9022)
    		(8,16.0327)
    		(9,16.0562)
    		(10,16.5483)
    	};
    	\end{axis}
        \end{tikzpicture}}
     \caption{Bounds on the global optimal value for noise-free leader selection~\eqref{LS2} in a C-shaped network.}
      \label{fig.Cshape_bounds}
\end{figure}
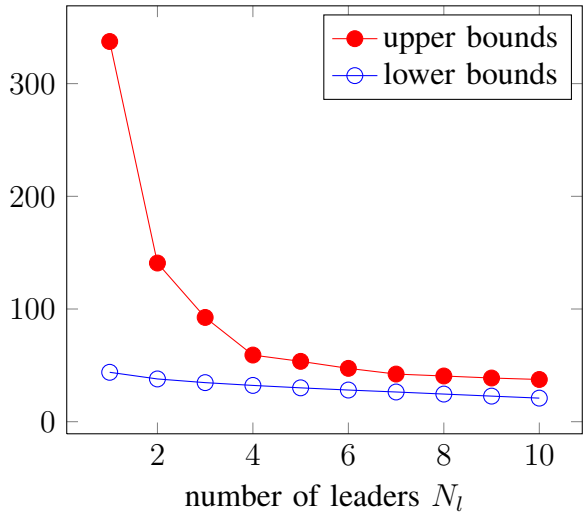
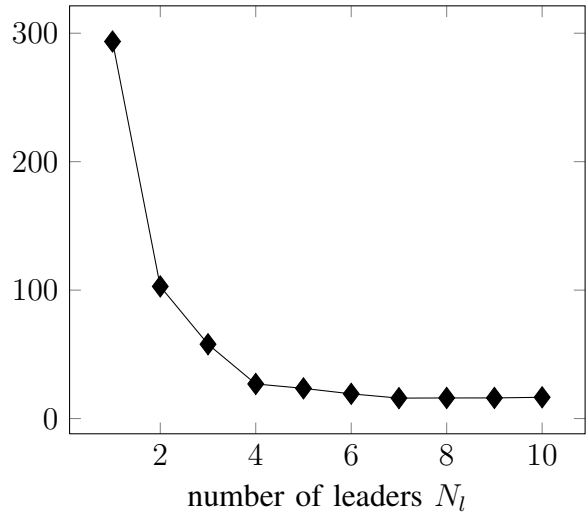

	\vspace*{-2ex}
\section{Concluding remarks}
    \label{sec.conclude}

The main contribution of this paper is the development of efficient algorithms for the selection of leaders in large stochastically forced consensus networks. For both noise-corrupted and noise-free formulations, we focus on computing lower and upper bounds on the global optimal value. Lower bounds are obtained by solving convex relaxations and upper bounds result from simple but efficient greedy algorithms.

Even though the convex relaxations can be cast as semidefinite programs and solved using general-purpose SDP solvers, we take advantage of the problem structure (such as separability of constraint sets) and develop customized algorithms for large-scale networks. We also improve the computational efficiency of greedy algorithms by exploiting the properties of low-rank modifications to Laplacian matrices. Several examples ranging from regular lattices to random networks are provided to illustrate the effectiveness of the developed algorithms. 

We are currently applying the developed tools for leader selection in different types of networks, including small-world and social networks~\cite{farzhalinjovCDC12,farlinzhajovACC13}. Furthermore, the flexibility of our framework makes it well-suited for quantifying performance bounds and selecting leaders in problem formulations with alternative objective functions~\cite{claalobuspoo12,clabuspooCDC12}. An open question of theoretical interest is whether leaders can be selected based on the solutions of the convex relaxations~\eqref{CR1} and~\eqref{CR2}. Since our computations suggest that the solution $Y^* \succeq 0$ to~\eqref{CR2} has a small number of dominant eigenvalues, it of interest to quantify the level of conservatism of the lower bounds that result from these low-rank solutions and to investigate scenarios under which~\eqref{CR2} yields a rank-1 solution. The use of randomized algorithms~\cite{kisluoluo09,luomasoyezha10} may provide a viable approach to addressing the former question.

	 \begin{figure}
      \centering
        \subfloat[$N_l = 3$, $J_f = 92.5$]
        {\label{fig.Cshape_noisefree_3leaders}
        \includegraphics[width=0.5\textwidth]{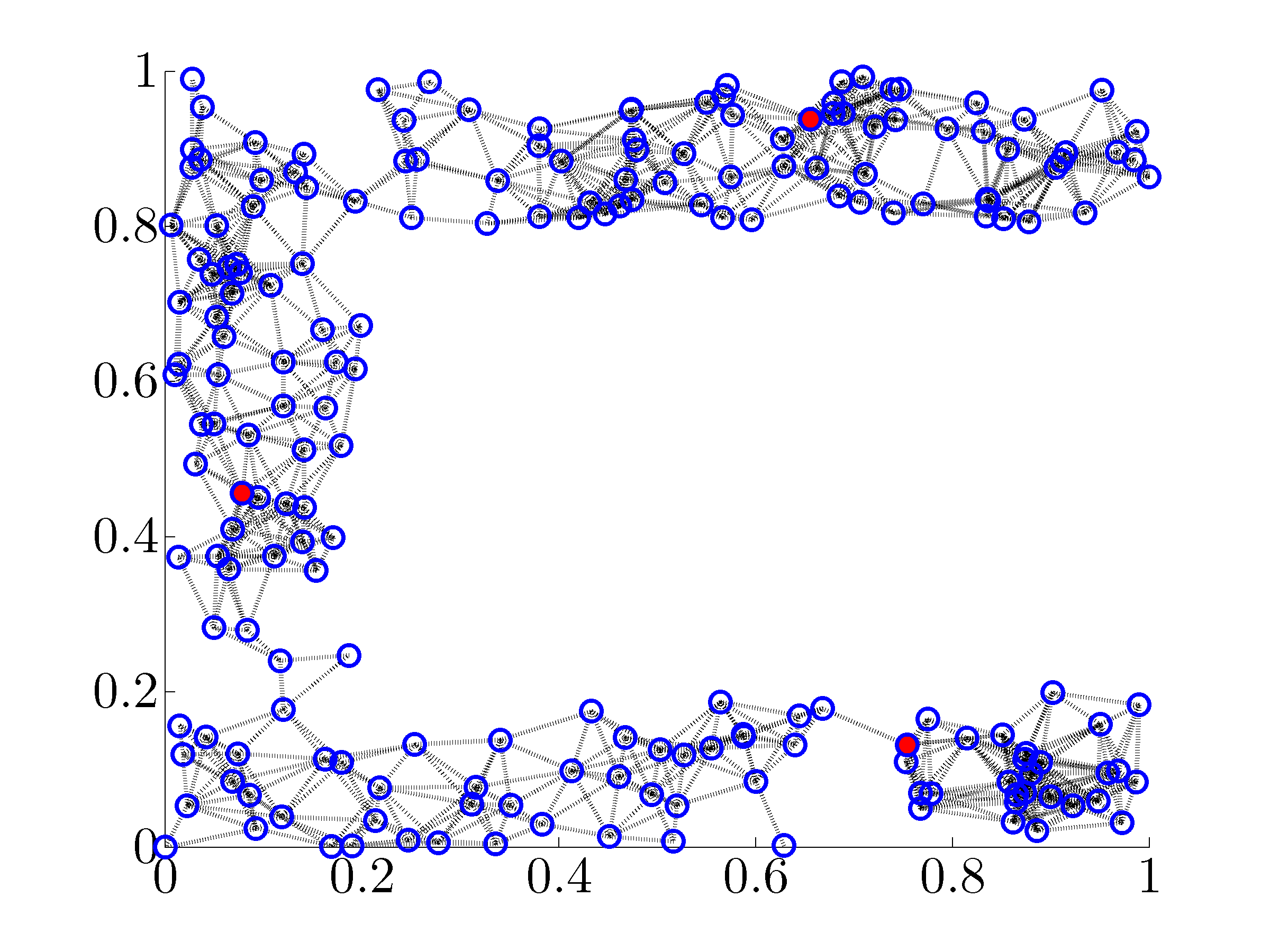}}
        \subfloat[$N_l = 9$, $J_f = 41.2$]
        {\label{fig.Cshape_noise_corrupted_9leaders}
        \includegraphics[width=0.5\textwidth]{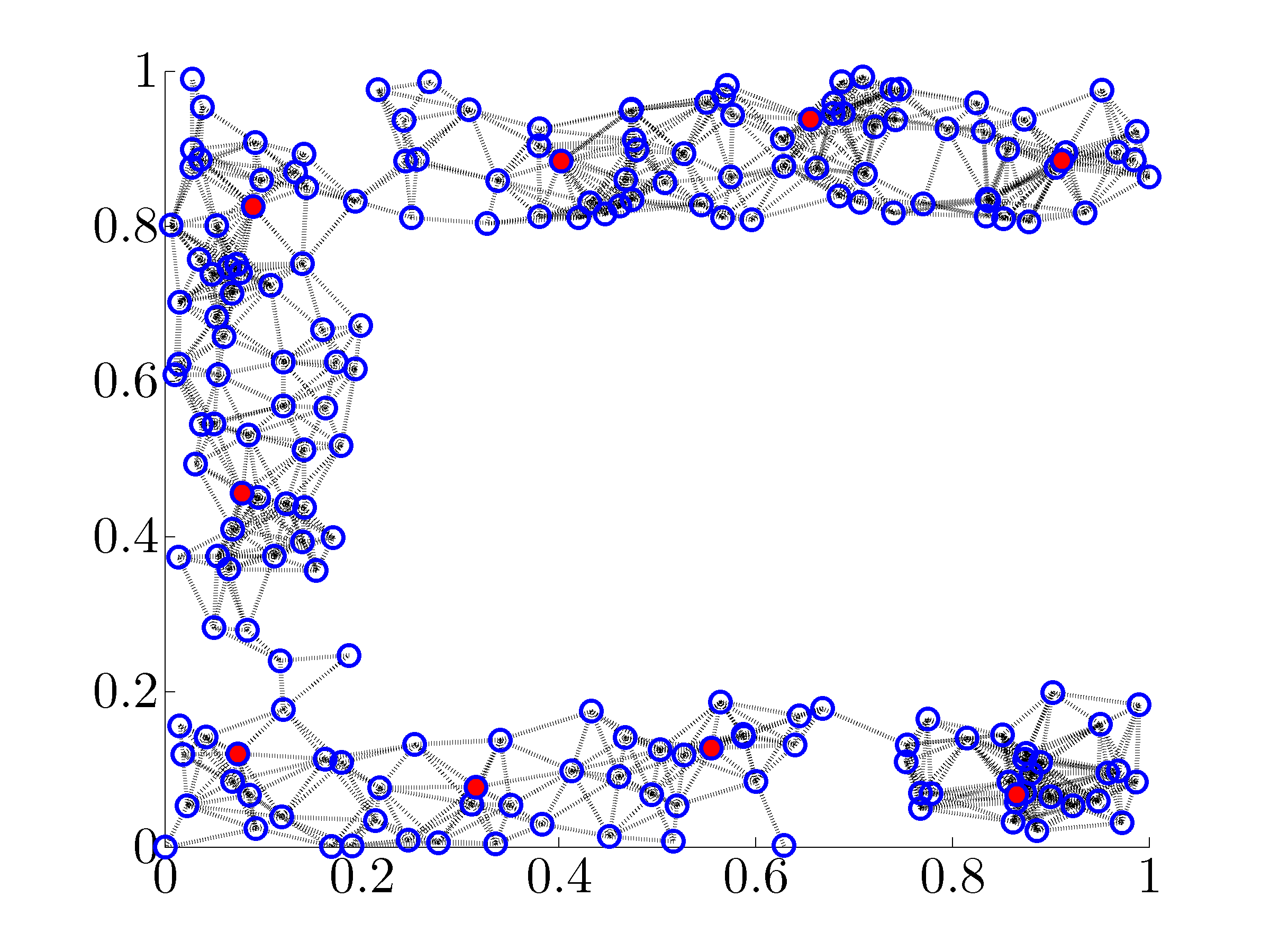}}
      \caption{Selection of noise-free leaders (\tc{red}{$\bullet$}) using the greedy algorithm for the C-shaped network.}
      \label{fig.Cshape_leaders}
    \end{figure}

	\vspace*{-2ex}
\appendix

	\vspace*{-1ex}	
\subsection{Connection between noise-free and noise-corrupted formulations}
    \label{app.nf-vs-nc}
    	
Partitioning $\psi$ into the state of the leader nodes $\psi_l$ and the state of the follower nodes $\psi_f$ brings system~(\ref{eq.state}) to the following form\footnote{Since the partition is performed with respect to the indices of the $0$ and $1$ elements of $x$, the matrix $D_x$ does not show in~(\ref{eq.partition}).}
    \beq
    \label{eq.partition}
        \tbo{\dot{\psi}_l}{\dot{\psi}_f}
        \, = \, - \,
        \tbt{L_l + D_{\kappa_l}}{L_0}{L_0^T}{L_f}
        \tbo{\psi_l}{\psi_f}
        \, + \,
        \tbo{w_l}{w_f}.
    \eeq
Here, $D_{\kappa_l} \DefinedAs \diag \, (\kappa_l)$ and $\kappa_l \in \mathbb{R}^{N_l}$ is the vector of feedback gains associated with the leaders. Taking the trace of the inverse of the $2 \times 2$ block matrix in~(\ref{eq.partition}) yields
    \[
    J
    \; = \;
    \trace
    \left(
    L_f^{-1}
    \,+\,
    L_f^{-1} \, L_0^T \, S_{\kappa_l}^{-1} \, L_0 \, L_f^{-1} \,+\, S_{\kappa_l}^{-1}
    \right)
    \]
where
    \[
        S_{\kappa_l}
        \; \DefinedAs \;
        L_l
        \,+\,
        D_{\kappa_l}
        \,-\,
        L_0 \, L_f^{-1} \, L_0^T
    \]
is the Schur complement of $L_f$. Since $S_{\kappa_l}^{-1}$ vanishes as each component of the vector $\kappa_l$ goes to infinity, the variance of the network in this case is determined by the variance of the followers,
    $
        J_{f}
        =
        \trace
        \left(
        L_f^{-1}
        \right).
   $
Here, $L_f$ denotes the reduced Laplacian matrix obtained by removing all rows and columns  that correspond to the leaders from $L$.	

	\vspace*{-2ex}
\subsection{Equivalence between leader selection and sensor selection problems}
    \label{app.sensor}

We next show that the problem of choosing $N_l$ absolute position measurements among $n$ sensors to minimize the variance of the estimation error in Section~\ref{sec.sensor-selection} is equivalent to the noise-corrupted leader selection problem~\eqref{LS1}.

Given the measurement vector $y$ in~(\ref{eq.y}), the linear minimum variance unbiased estimate of $\psi$ is determined by~\cite[Chapter 4.4]{lue68}
    \[
        \hat{\psi} \,=\, (E_r W_r^{-1} E_r^T + E_a (E_a^T W_a E_a)^{-1} E_a^T)^{-1} ( E_r W_r^{-1} y_r + E_a (E_a^T W_a E_a)^{-1} y_a)
    \]
with the covariance of the estimation error
    \[
        \Sigma
        \,=\,
        {\cal E} ( (\psi - \hat{\psi})(\psi - \hat{\psi})^T )
        \,=\,
        (E_r W_r^{-1} E_r^T + E_a (E_a^T W_a E_a)^{-1} E_a^T)^{-1}.
    \]
Furthermore, let us assume that $W_r = I$ and $W_a = D_\kappa^{-1}$. The choice of $W_a$ indicates that a larger value of $\kappa_i$ corresponds to a more accurate absolute measurement of sensor $i$. Then
    \[
        ( E_a^T W_a E_a )^{-1}
        \, = \,
        (E_a^T D_\kappa^{-1} E_a)^{-1}
        \, = \,
        E_a^T D_\kappa E_a
    \]
and thus,
    \[
        \Sigma
        \, = \,
        (E_r E_r^T + E_a E_a^T D_\kappa E_a E_a^T)^{-1}.
    \]
Since $E_a E_a^T$ is a diagonal matrix with its $i$th diagonal element being $1$ for $i \in {\cal I}_a$ and $E_r E_r^T$ is the Laplacian matrix of the relative measurement graph, it follows that
    \[
        D_x \,=\, E_a E_a^T,
        ~~~
        L \,=\, E_r E_r^T,
        ~~~
        \Sigma
        \, = \,
        (L \,+\, D_x D_\kappa D_x)^{-1}
        \, = \,
        (L \,+\, D_\kappa D_x)^{-1}
    \]
where $D_x D_\kappa D_x = D_\kappa D_x$ because $D_x$ and $D_\kappa$ commute and $D_x D_x = D_x$. Therefore, we have established the equivalence between
the noise-corrupted leader selection problem~\eqref{LS1} and the problem of choosing $N_l$ sensors with absolute position measurements such that the variance of the estimation error is minimized.

To formulate an estimation problem that is equivalent to the noise-free leader selection problem~\eqref{LS2}, we follow~\cite{barhes07} and assume that the positions of $N_l$ sensors are known {\em a priori\/}. Let $\psi_l$ denote the positions of these {\em reference sensors\/} and let $\psi_f$ denote the positions of the other sensors. We can thus write the relative measurement equation~(\ref{eq.relative}) as
    \[
        y_r \,=\, E_r^T \psi \,+ \, w_r \,=\, E_l^T \psi_l \,+\, E_f^T \psi_f \,+\, w_r
    \]
and the linear minimum variance unbiased estimate of $\psi_f$ is given by
    \[
        \hat{\psi}_f
        \,=\,
        (E_f E_f^T)^{-1} E_f W_r^{-1}
        \,
        (y_r \,-\, E_l^T \psi_l)
    \]
with covariance of the estimation error
    $
        \Sigma_{f} = (E_f E_f^T)^{-1}.
    $
Identifying $E_f E_f^T$ with $L_f$ in the Laplacian matrix
    \[
        L \, = \, E_r E_r^T
        \, = \,
        \tbt{E_l E_l^T}{E_l E_f^T}{E_f E_l^T}{E_f E_f^T}
        \,=\,
        \tbt{L_l}{L_0}{L_0^T}{L_f}
    \]
establishes equivalence between problem~\eqref{LS2} and the problem of assigning $N_l$ sensors with known reference positions to minimize the variance of the estimation error of sensor network.

	\vspace*{-2ex}
\subsection{Customized interior point method for~\eqref{CR1}}
    \label{app.IPM}

We begin by augmenting the objective function in~\eqref{CR1} with log-barrier functions associated with the inequality constraints on $x_i$
    \beq
    \label{eq.approx}
    \ba{ll}
    \underset{x}{\mbox{minimize}}
    &
    q (x)
    \, = \,
    \tau \,
    \trace
    \,\big(
    (L \,+\, D_\kappa D_x )^{-1}
    \big)
    \, + \,
    \ds{\sum_{i \, = \, 1}^n}
    \big(
    - \,
    \log(x_i)
    \, - \,
    \log(1 - x_i)
    \big)
    \\
    \text{subject to}
    &
    \dso^T x \,=\, N_l.
    \ea
    \eeq
As the positive scalar $\tau$ increases to infinity, the solution of the approximate problem~(\ref{eq.approx}) converges to the solution of the convex relaxation~\eqref{CR1}~\cite[Section 11.2]{boyvan04}. We solve a sequence of problems~(\ref{eq.approx}) by gradually increasing $\tau$, and by starting each minimization using the solution from the previous value of $\tau$. We use Newton's method to solve~(\ref{eq.approx}) for a fixed $\tau$, and the Newton direction is given by
    \beq
    \ba{rrl}
    x_{\rm nt}
    & \!\! = \!\! &
    -\,
    (\nabla^2 q)^{-1} \nabla q
    \,-\,
    \delta (\nabla^2 q)^{-1} \dso
    \\[0.15cm]
    \delta
   & \!\! \DefinedAs \!\! &
    -
    \dfrac
    {\dso^T (\nabla^2 q)^{-1} \nabla q}
    {\dso^T (\nabla^2 q)^{-1} \dso}.
    \ea
    \non
    \eeq
Here, the expressions for the $i$th entry of the gradient direction $\nabla q$ and for the Hessian matrix are given by
    \begin{align*}
    (\nabla q)_i
    \; =& \;
    - \, \tau \,
    \kappa_i \, ( (L + D_\kappa D_x)^{-2} )_{ii}
    \,-\,
    x_i^{-1}
    \,-\,
    (x_i - 1)^{-1}
    \\
    \nabla^2 q
    \; =& \;
    2 \tau \,
    ( D_\kappa (L + D_\kappa D_x)^{-2} D_\kappa ) \circ  (L + D_\kappa D_x)^{-1}
    \, + \,
    \diag \left( x_i^{-2} \,+\, (1-x_i)^{-2} \right).
    \end{align*}

We next examine complexity of computing the Newton direction $x_{\rm nt}$. The major cost of computing $\nabla^2 q$ is to form $(L + D_\kappa D_x)^{-2}$, which takes $(7/3)n^3$ operations to form $(L + D_\kappa D_x)^{-1}$ and $n^3$ operations to form $(L + D_\kappa D_x)^{-2}$. Computing $x_{\rm nt}$ requires solving two linear equations,
    \[
    (\nabla^2 q)
    \,
    y
    \,=\,
    -\nabla q,
    ~~~
    (\nabla^2 q)
    \,
    z
    \,=\,
    -\dso
    \]
which takes $(1/3)n^3$ operations using Cholesky factorization. Thus, the computation of each Newton step requires $(7/3 + 1 + 1/3)n^3 = (11/3)n^3$ operations.


	\vspace*{-2ex}
\subsection{Solving~\eqref{eq.z_prob} using ADMM}
    \label{app.quad_ADMM}

Since the solution of~\eqref{eq.z_prob} does not depend on the value of $\rho$, and since the constraint set is the intersection of the hyperplane and the unit box, we can express~\eqref{eq.z_prob} as
    \beq
    \label{eq.zw_prob}
        \ba{ll}
        \underset{z, \; w}{\mbox{minimize}}
        &
        \dfrac{1}{2} \,
        \| z \,-\, v^k \|_2^2
        \,+\, \phi_4(z)
        \,+\, \phi_5(w)
        \\
        \mbox{subject to}
        &
        z \,-\, w \,=\, 0.
        \ea
    \eeq
Here, $\phi_4$ and $\phi_5$ are the indicator functions of the hyperplane $\{ z \left| \right. \dso^T z = N_f \}$ and the box $\{ w \left| \right. 0 \leq w \leq \dso \}$, respectively. The augmented Lagrangian associated with~\eqref{eq.zw_prob} is given by
    \[
        {\cal L}_\varrho (z,w,\lambda)
        \, = \,
        \dfrac{1}{2} \,
        \| z \,-\, v^k \|_2^2
        \,+\, \phi_4(z)
        \,+\, \phi_5(w)
        \,+\, \langle \lambda, \, z \,-\, w \rangle
        \,+\, \dfrac{\varrho}{2} \, \| z \,-\, w \|_2^2
    \]
and the ADMM algorithm uses the sequence of iterations
    \begin{subequations}
    \label{eq.quad_ADMM}
    \begin{align}
    \label{eq.z_update}
    z^{s+1}
    \,&\DefinedAs\,
    \underset{z}{\operatorname{arg \, min}}
    \;
    \left(
    \frac{1}{2} \, \| z \,-\, v^k \|_2^2
    \, + \, \phi_4(z)
    \, + \,
    \frac{\varrho}{2} \, 
    \|z \,-\, (w^s \,-\, \lambda^s/\varrho)\|_2^2
    \right)
    \\
    \label{eq.w_update}
    w^{s+1}
    \,&\DefinedAs\,
    \underset{w}{\operatorname{arg \, min}}
    \;
    \left(
    \phi_5(w)
    \, + \,
    \frac{\varrho}{2} \, 
    \| w \,-\, (z^{s+1} \,+\, \lambda^s/\varrho) \|_2^2
    \right)
    \\
    \lambda^{s+1}
    \,&\DefinedAs\,
    \lambda^{s}
    \,+\,
    \varrho \,
    (z^{s+1} \,-\, w^{s+1})
    \end{align}
    \end{subequations}
until $\| z^{s+1} - w^{s+1} \|_2 \leq \epsilon$ and $\| w^{s+1} - w^s \|_2 \leq \epsilon$. By solving the KKT conditions for~\eqref{eq.z_update}, we obtain an analytical solution
    \beq
    \label{eq.z_sol}
        z^{s+1}
        \,=\,
        \left( \varrho \, w^s \,-\, \lambda^s \,+\, v^k \,-\, \eta \, \dso \right)
        /
        \left( \varrho \,+\, 1 \right)
    \eeq
where the scalar $\eta$ is given by
    \[
        \eta \,=\, \left( \dso^T (\varrho \, w^s \,-\, \lambda^s \,+\, v^k) \,-\, (\varrho \,+\, 1) N_f \right)/n.
    \]
On the other hand, the solution to~\eqref{eq.w_update} is determined by the projection of $\mu = z^{s+1} + \lambda^s / \varrho$ on the box $\{ w \,|\; 0 \leq w \leq \dso \}$,
    \beq
    \label{eq.w_sol}
        w_i^{s+1}
        \,=\,
        \left\{
        \ba{ll}
        1,      &    \mu_i > 1 \\
        \mu_i,  &    0 \leq \mu_i \leq 1 \\
        0,      &    \mu_i < 0.
        \ea
        \right.
    \eeq
Both the solution~\eqref{eq.z_sol} and the projection~\eqref{eq.w_sol} take $O(n)$ operations.

	\vspace*{-2ex}

\end{document}